\newtheorem{definition}{Definition}
\newtheorem{theorem}{Theorem}
\newtheorem{lemma}{Lemma}
\newtheorem{remark}{Remark}
\newtheorem{example}{Example}
\newtheorem{assumption}{Assumption}
\newenvironment{proof}[1][Proof]{\textbf{#1.} }{\ \rule{0.5em}{0.5em}}
\newcommand{\R}{\mathbb R}
\begin{document}

 \begin{frontmatter}

\title{Leader-following consensus for multi-agent systems with nonlinear dynamics subject to additive bounded disturbances and asynchronously sampled outputs (long version)}

\thanks[footnoteinfo]{This paper was not presented at any IFAC 
meeting. Corresponding author T. M\'enard.}

 \author[tomas]{Tomas M\'enard}\ead{e-mail: tomas.menard@unicaen.fr},
 \author[lias]{Syed Ali Ajwad}\ead{e-mail: syed.ali.ajwad@univ-poitiers.fr},
 \author[emmanuel]{Emmanuel Moulay}\ead{emmanuel.moulay@univ-poitiers.fr},
 \author[lias]{Patrick Coirault}\ead{patrick.coirault@univ-poitiers.fr} 
  and 
 \author[michael]{Michael Defoort}\ead{michael.defoort@uphf.fr}
 
\address[tomas]{LAC (EA 7478), Normandie Universit\'{e}, UNICAEN, 6 bd du Mar\'{e}chal Juin, 14032 Caen Cedex, France.}
 \address[lias]{LIAS (EA 6315), Universit\'{e} de Poitiers, 2 rue Pierre Brousse, 86073 Poitiers Cedex 9, France.}
 \address[emmanuel]{XLIM (UMR CNRS 7252), Universit\'{e} de Poitiers, 11 bd Marie et Pierre Curie, 86073 Poitiers Cedex 9, France.}

 \address[michael]{LAMIH, UMR CNRS 8201, Polytechnic University Hauts-de-France, Valenciennes, 59313 France.}

\begin{abstract}
This paper is concerned with the leader-following consensus problem for a class of Lipschitz nonlinear multi-agent systems with uncertain dynamics, where each agent only transmits its noisy output, at discrete instants and independently from its neighbors. The proposed consensus protocol is based on a continuous-discrete time observer, which provides a continuous time estimation of the state of the neighbors from their discrete-time output measurements, together with a continuous control law. The stability of the multi-agent system is analyzed with a Lyapunov approach and the exponential practical convergence is ensured provided that the tuning parameters and the maximum allowable sampling period satisfy some inequalities. The proposed protocol is simulated on a multi-agent system whose dynamics are ruled by a Chua's oscillator.
\end{abstract}
\begin{keyword}
Leader-following consensus, directed graph, sampled data, continuous-discrete time observer
\end{keyword}
\end{frontmatter}

\section{Introduction}
The study of Multi-Agent Systems (MAS) has been considered by many researchers during the last decades, due to its important practical applications, such as formation of UAV, attitude synchronization of spacecraft or distributed sensor networks \cite{Olfati-Saber2004}.
MAS are usually characterized by a topology network which reflects the possible ways of communication among agents. A fundamental problem for MAS is to design protocols such that all the agents in the network reach a common value. This problem can be subdivided into two categories, the leaderless consensus and the leader-following consensus. In the leaderless consensus problem, the final common position of the agents cannot be selected. Then, it might be useful to consider a real or virtual leader whose prescribed trajectory has to be followed by all the agents \cite{Hu2012}.\\
Many results have been obtained for MAS whose dynamics are linear, see for instance \cite{Hong2006,Zhou2014}. However, in many practical cases, MAS are governed by more complex dynamics, namely nonlinear dynamics. These nonlinear dynamics usually cannot be neglected in order to obtain more accurate control procedures and objectives. Consensus protocols using the full state information have been considered in \cite{Li2012,Defoort2015} for a fixed topology, in \cite{Wen2014} for time-varying topology or in \cite{Ma2014} for second order dynamics. When the state of the agents is only partially available, that is when only the measured output can be used, it is then necessary to use observers in order to reconstruct the state. Such a strategy has been used  for example in \cite{Hu2014} for a general class of nonlinear MAS and in \cite{Wan2017} for heterogeneous agents.\\
In the aforementioned nonlinear protocols, the considered signals are assumed to be available continuously in real time. But in most applications, it is more desirable or sometimes only possible to transmit the measurements in a discrete way. This may be due to technical constraints or for energy saving \cite{Ploennigs2010,Guo2014}. It is then important to adapt the consensus protocol in order to deal with the sampled signals. A first idea has been to consider time-triggered sampling. Several protocols have been proposed when the state of the agents can be fully measured. Impulsive control for some specific kinds of systems has been considered in \cite{He2017a}. The delay input approach has been used in \cite{Wen2013,Wen2014a,Ding2015} for deterministic sampling period and in \cite{Shen2012,Wan2016,He2017} for stochastic sampling. When only the output is available at discrete instants, then an observer must be designed. A distributed observer protocol with a zero order hold input control has been proposed in \cite{Wan2017a}. In this work, the sampling periods have to be synchronized between the agents.\\
Another approach, which allows aperiodic and asynchronous sampling periods, is event-triggering based consensus protocol. First-order integrators have been considered in \cite{Dimarogonas2012} and general linear systems in \cite{Yue2013,Zhao2019}. Some specific nonlinear classes of systems have also been considered in the literature. MAS with Lure's nonlinear dynamics have been investigated in \cite{Huang2016} and first order nonlinear systems in \cite{Xie2015,Liu2018,Yang2019}. Other classes of nonlinear systems have been treated in \cite{Kaviarasan2018,Liu2018a}. Though providing interesting results, event-triggered schemes involve a more complicated set-up and additional parameters to tune. Indeed, a threshold function has to be considered which dictates the sampling instants for the data transmission. Furthermore, one has to be careful about the Zeno phenomenon which can occur for some schemes \cite{Ding2018}.\\
Most works with discrete signal transmission hold the control input constant between sampling instants. One takes advantage here, of the fact that time-varying control input can be considered. Indeed, only the transmitted signals have to be sampled, not the input. Continuous-discrete time observers, which reconstruct the state in continuous time from discrete-time measurements, have been greatly developed these last years, as in \cite{Farza2014,Bouraoui2015,Hernandez-Gonzalez2016} for different classes of nonlinear systems, and are then used here to tackle the problem of leader-following consensus where only discrete-time outputs are transmitted through the network. It should be noted that these works only consider the observer design, the convergence is obtained by assuming that the input belongs to a bounded set and then cannot be applied directly to the problem considered here. This idea has already been exploited in \cite{Li2017,Phillips2016,Phillips2019} for linear systems, following an hybrid approach, where sufficient conditions for convergence are obtained,  based on LMIs. An high-gain approach has been followed in  \cite{Menard2019,Ajwad2019} for the leaderless and leader-following consensus of MAS with double integrator dynamics. The control part of this high-gain approach is mainly based on \cite{Bedoui2008}, but where only state feedback is considered. These works are then extended here to the leader-following consensus for a class of systems with nonlinear and uncertain dynamics. The main novelty of the paper is the design of a leader-following consensus protocol for a multi agent system, whose agent dynamics belong to a class of uniformly observable multi-output nonlinear systems, where only a part of the state is measured, and each agent's output is transmitted at some discrete instants to its neighbors independently of the other agents. Several features of the proposed approach have to be emphasized. Firstly, only the sampled outputs have to be transmitted through the network, it is not necessary to transmit the inputs. Secondly, the data sent by the agents are not needed to be synchronized, each agent can send its measurements independently from its neighbors, provided that the maximum allowable sampling period is bounded. This allows to reduce the overall bandwidth of the network. Thirdly, the proposed protocol has only three tuning parameters, namely $\bar c,\lambda,\theta$, where $\bar c$ is the coupling force, $\lambda>0$ represents the speed of convergence of the control part and $\theta>0$ represents the speed of convergence of the observer part. Then, the tuning of the proposed scheme is relatively simple and can be adapted easily by the practitioner since the effect of the modification of each parameter has a direct physical meaning. Fourth, the class of nonlinear systems considered here is challenging since it is quite large and it has not yet been considered in the literature for aperiodic and asynchronous sampling periods. A new protocol is thus proposed here to tackle this problem.\\

The paper is organized as follows. Some notations and existing results are recalled in Section \ref{section_notations}. The class of considered MAS is depicted in section \ref{section_model}. The proposed protocol together with a convergence result is reported in Section \ref{section_main_part}. Section \ref{section_example} contains an example illustrating the performances of the proposed protocol. Finally, Section \ref{section_conclusion} concludes the paper.
\section{Preliminaries}\label{section_notations}
In this paper, the following notations will be used. The symbol $\stackrel{\triangle}{=}$ means equal by definition. The set of $n\times n$ real matrices is denoted by $\mathbb R^{n\times n}$. The transpose for real matrices is represented by the superscript $T$. $I_n$ is the identity matrix of dimension $n$, $0_{m\times n}$ is the zero matrix of dimension $m\times n$ and $0_{n}\stackrel{\triangle}{=}0_{n\times n}$. The Kronecker product of matrices $A$ and $B$ is $A\otimes B$. For a symmetric matrix $M$, $\rho_{\max}(M)$ and $\rho_{\min}(M)$ respectively denote the maximum and minimum eigenvalue of $M$. The notation $\text{diag}(w_1,\dots,w_q)$, with $w_i\in\mathbb R^{m\times m}$, $i=1,\dots,q$, $q,m\in\mathbb N$, is used for the diagonal by block matrix with $w_1,\dots,w_q$ on its diagonal. The positive definiteness of a matrix $M$ is denoted $M>0$. The vector of dimension $N\in\mathbb N$ with all entries equal to $1$ is denoted $\mathbf{1}_N$.\\
A directed graph $\mathcal G$ is a pair $(\mathcal V,\mathcal E)$, where $\mathcal V$ is a nonempty finite set of nodes and $\mathcal E\subseteq\mathcal V\times \mathcal V$ is a set of edges, in which an edge is represented by an ordered pair of distinct nodes. For an edge $(i,j)$, node $i$ is called the parent node, node $j$ the child node, and $i$ is a neighbor of $j$. A graph with the property that $(i,j)\in\mathcal E$ implies $(j,i)\in\mathcal E$ is said to be undirected. A path on $\mathcal G$ from node $i_1$ to node $i_l$ is a sequence of ordered edges of the form $(i_k,i_{k+1})$, $k=1,\dots,l-1$. A directed graph has or contains a directed spanning tree if there exists a node called the root, which has no parent node, such that there exists a directed path from this node to every other node in the graph. Suppose that there are $N$ nodes in a graph. The adjacency matrix $\mathcal{A}=(a_{ij})\in\mathbb R^{N\times N}$ is defined by $a_{ii}=0$ and $a_{ij}=1$ if $(j,i)\in\mathcal E$ and $a_{ij}=0$ otherwise. The Laplacian matrix $\mathcal L\in\mathbb R^{N\times N}$ is defined as $\mathcal L_{ii}=\sum_{j\ne i} a_{ij}$, $\mathcal L_{ij}=-a_{ij}$ for $i\ne j$.
\begin{definition}\cite[Def. 1.2 and Th. 2.3-G20 p.133-134]{Berman1994}
 A non singular real matrix $Q\in\mathbb Z^{n\times n}$ is called an $M$-matrix if all its diagonal elements are positive, all of its off-diagonal elements are non positive, and each of its eigenvalues has positive real part.
\end{definition}
\begin{lemma}\label{lemma_m_matrix}\cite[Th. 2.3-H24 p.134]{Berman1994}
Suppose that $Q\in\mathbb Z^{n\times n}$ is an $M$-matrix. Then, there exists a positive vector $\omega=\begin{bmatrix} \omega_1&\dots&\omega_n\end{bmatrix}^T$, such that $\Omega Q+Q^T\Omega>0$, where $\Omega=\text{diag}(\omega_1,\dots,\omega_n)$.
\end{lemma}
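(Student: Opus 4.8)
The plan is to construct the weight vector $\omega$ from the semipositivity vectors of $Q$ and of $Q^{T}$, and then to observe that $\Omega Q+Q^{T}\Omega$ is a symmetric $Z$-matrix whose positive definiteness is forced by the very definition of an $M$-matrix.

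First I would record that $Q^{T}$ is again an $M$-matrix: it is nonsingular, its diagonal entries are those of $Q$ (hence positive), its off-diagonal entries are those of $Q$ in transposed positions (hence nonpositive), and its spectrum equals that of $Q$. Next I would invoke the standard list of equivalent characterizations of nonsingular $M$-matrices, namely \cite[Th. 2.3]{Berman1994}, to obtain positive vectors $\xi=\begin{bmatrix}\xi_{1}&\dots&\xi_{n}\end{bmatrix}^{T}$ and $\zeta=\begin{bmatrix}\zeta_{1}&\dots&\zeta_{n}\end{bmatrix}^{T}$ with $Q\xi>0$ and $Q^{T}\zeta>0$ (inequalities understood componentwise). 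Then I would set $\omega_{i}=\zeta_{i}/\xi_{i}>0$ and $\Omega=\text{diag}(\omega_{1},\dots,\omega_{n})$, so that $\Omega\xi=\zeta$ by construction.

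Writing $S=\Omega Q+Q^{T}\Omega$, the identity $\Omega\xi=\zeta$ gives
\[
S\xi=\Omega(Q\xi)+Q^{T}(\Omega\xi)=\Omega(Q\xi)+Q^{T}\zeta ,
\]
a sum of two componentwise-positive vectors (the $i$-th entry of $\Omega(Q\xi)$ equals $\omega_{i}(Q\xi)_{i}>0$), hence $S\xi>0$. Moreover $S$ is symmetric, its diagonal entries $2\omega_{i}Q_{ii}$ are positive, and its off-diagonal entries $\omega_{i}Q_{ij}+\omega_{j}Q_{ji}$ are nonpositive because $Q$ is a $Z$-matrix; thus $S$ is a symmetric $Z$-matrix admitting a strictly positive vector $\xi$ with $S\xi>0$. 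Applying \cite[Th. 2.3]{Berman1994} once more, $S$ is a nonsingular $M$-matrix, so all its eigenvalues have positive real part; since $S$ is symmetric they are real, hence positive, and therefore $S=\Omega Q+Q^{T}\Omega>0$, as claimed.

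The only ingredient that is not mere bookkeeping is the equivalence between being a $Z$-matrix that admits a strictly positive semipositivity vector and being a nonsingular $M$-matrix, which is used twice and is precisely the content of the cited theorem; I expect the main (and minor) obstacle to be simply checking that the sign patterns line up and that the scaling $\omega_{i}=\zeta_{i}/\xi_{i}$ is exactly what turns $Q^{T}(\Omega\xi)$ into the usable vector $Q^{T}\zeta$.
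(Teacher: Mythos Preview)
Your argument is correct. The construction $\omega_i=\zeta_i/\xi_i$ from the semipositivity vectors of $Q$ and $Q^T$, followed by the observation that $S=\Omega Q+Q^T\Omega$ is a symmetric $Z$-matrix with $S\xi>0$ and hence a positive definite $M$-matrix, is precisely the standard route to this result and all the sign checks you list go through.

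Note, however, that the paper does not actually prove this lemma: it is stated with a direct citation to \cite[Th.~2.3-H24 p.~134]{Berman1994} and used as a black box. Condition~H24 in that theorem is exactly the existence of a positive diagonal $\Omega$ making $\Omega Q+Q^T\Omega$ positive definite, listed among the many equivalent characterizations of nonsingular $M$-matrices. So you have supplied a genuine proof where the paper merely invokes the reference; your argument is in fact the classical one found in Berman--Plemmons and elsewhere, and there is nothing to correct.
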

\begin{lemma}\label{lemma_continuous_discrete_time}
 Let $n\in\mathbb N$ and $v_i:\R \rightarrow \mathbb R$, $i=1,\dots,n$ be some functions $C^1$ on $(0,+\infty)$ and such that $v_i(t)=0$ for $t<0$, verifying
   \begin{equation}\label{eqn_lemma_continuous_discret_time_000}
  \frac{d}{dt}\left(\sum_{i=1}^n\gamma_iv_i^2(t)\right)\le\sum_{i=1}^n\left(-a_iv_i^2(t)
  +b_i\int_{t-\delta}^t v_i^2(s)ds\right)+k,
 \end{equation}
for all $t\ge0$, where $\gamma_i>0$, $a_i>0$, $b_i\ge 0$, $i=1,\dots,n$, $k\ge0$ and
 \begin{equation}\label{eqn_lemma_continuous_discret_time_001}
 \delta<\min\left(\frac{(\sqrt{2}-1)}{2}\min_{i=1,n}\left(\frac{a_i}{b_i}\right),\frac{1}{\sqrt{2}}\min_{i=1,n}\left(\frac{\gamma_i}{a_i}\right) \right).
\end{equation}
Then, the following inequality holds true
\begin{equation}
 \sum_{i=1}^n\gamma_iv_i^2(t)\le \varsigma e^{-\vartheta t}+\frac{k}{\vartheta},
\end{equation}
with $\vartheta=\frac{1}{2}\min_{i=1,\dots,n}\left(\frac{a_i}{\gamma_i}\right)$ and $\varsigma=\sum_{i=1}^n\gamma_iv_i^2(0)$.
\end{lemma}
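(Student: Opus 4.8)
The plan is to dominate the distributed-delay term by a Lyapunov--Krasovskii functional carrying an exponential weight, so that the inequality collapses to a scalar one of the form $\dot W\le-\vartheta W+k$, from which the conclusion follows by a standard comparison argument. Write $V(t)=\sum_{i=1}^{n}\gamma_iv_i^2(t)$, so that the claim reads $V(t)\le\varsigma e^{-\vartheta t}+k/\vartheta$. I would work with
\begin{equation*}
W(t)=V(t)+e^{\vartheta\delta}\int_{t-\delta}^{t}e^{\vartheta(s-t)}\,(s-t+\delta)\Big(\sum_{i=1}^{n}b_iv_i^2(s)\Big)\,ds .
\end{equation*}
Two elementary facts come first: the integrand is nonnegative on $[t-\delta,t]$, hence $0\le V(t)\le W(t)$; and since $v_i(s)=0$ for $s<0$, the integral vanishes at $t=0$, so $W(0)=V(0)=\varsigma$ --- this is exactly where the one-sided support hypothesis on the $v_i$ enters.

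Next I would differentiate $W$. The Leibniz rule applied to the memory term gives: no contribution at the lower limit (the factor $s-t+\delta$ vanishes there); a term $\delta e^{\vartheta\delta}\sum_ib_iv_i^2(t)$ at the upper limit; and, from the $t$-dependence of the kernel, the two terms $-\vartheta\big(W(t)-V(t)\big)$ and $-e^{\vartheta\delta}\int_{t-\delta}^{t}e^{\vartheta(s-t)}\sum_ib_iv_i^2(s)\,ds$. Since $e^{\vartheta\delta}e^{\vartheta(s-t)}\ge1$ on $[t-\delta,t]$, this last integral dominates $\int_{t-\delta}^{t}\sum_ib_iv_i^2(s)\,ds$, which is precisely the unwanted term produced by \eqref{eqn_lemma_continuous_discret_time_000}. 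Adding the hypothesis to $\dot W=\dot V+\frac{d}{dt}(W-V)$, these two integral contributions cancel with a favourable sign and one is left with
\begin{equation*}
\dot W(t)\le-\sum_{i=1}^{n}\big(a_i-\delta e^{\vartheta\delta}b_i\big)v_i^2(t)-\vartheta\big(W(t)-V(t)\big)+k .
\end{equation*}

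The core of the proof is then the pointwise estimate $a_i-\delta e^{\vartheta\delta}b_i\ge\vartheta\gamma_i$ for every $i$, which bounds the first sum by $-\vartheta V(t)$ and hence yields $\dot W(t)\le-\vartheta W(t)+k$. Both smallness conditions on $\delta$ are used here. Because $\vartheta=\frac12\min_i(a_i/\gamma_i)$ and $\min_i(a_i/\gamma_i)\,\min_i(\gamma_i/a_i)\le1$, the bound $\delta<\frac{1}{\sqrt2}\min_i(\gamma_i/a_i)$ forces $\vartheta\delta<\frac{1}{2\sqrt2}$, so $e^{\vartheta\delta}$ is uniformly controlled, in particular $e^{\vartheta\delta}\le\sqrt2+1$; then $\delta<\frac{\sqrt2-1}{2}\min_i(a_i/b_i)$ gives $\delta e^{\vartheta\delta}b_i<\frac12 a_i$ (using $(\sqrt2-1)(\sqrt2+1)=1$), while $\vartheta\gamma_i\le\frac12 a_i$ always holds, and subtracting proves the estimate.

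Finally, integrating $\dot W\le-\vartheta W+k$ (comparison lemma, i.e. examining $e^{\vartheta t}W(t)$) gives $W(t)\le W(0)e^{-\vartheta t}+\frac{k}{\vartheta}(1-e^{-\vartheta t})\le\varsigma e^{-\vartheta t}+\frac{k}{\vartheta}$, and $V(t)\le W(t)$ concludes. The delicate step is the construction and differentiation of $W$: the product kernel $e^{\vartheta(s-t)}(s-t+\delta)$ is chosen precisely so that the Leibniz rule simultaneously produces the $-\vartheta W$ term and an integral that, after the $e^{\vartheta\delta}$ rescaling, absorbs the delay term of \eqref{eqn_lemma_continuous_discret_time_000}; some care with one-sided derivatives is also needed at $t=0$, where the $v_i$ need not be continuous.
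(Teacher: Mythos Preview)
Your proof is correct and follows the same overall strategy as the paper: augment $V=\sum_i\gamma_iv_i^2$ by a Lyapunov--Krasovskii term with an exponential weight so that the delay integral in \eqref{eqn_lemma_continuous_discret_time_000} is absorbed, obtain $\dot W\le-\vartheta W+k$, and conclude by the comparison lemma together with $V\le W$ and $W(0)=V(0)$.

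The difference is purely technical but worth noting. The paper uses the double-integral functional
\[
W=V+\sum_i b_i\int_0^{\delta}\!\!\int_{t-s}^t e^{\kappa\beta(\nu-t+s)}v_i^2(\nu)\,d\nu\,ds,
\]
with auxiliary constants $\beta=\min_i a_i/(\sqrt2\gamma_i)$, $c_i=\tfrac{b_i}{a_i}\tfrac{e^{\beta\delta}-1}{\beta}$, $\kappa=\min_i(1-c_i)$, and relies on the estimate $e^x\le1+\sqrt2 x$ on $[0,\tfrac12]$ to show $\kappa\ge1/\sqrt2$ and hence $\kappa\beta\ge\vartheta$. Your single-integral functional with the product kernel $e^{\vartheta(s-t)}(s-t+\delta)$ (rescaled by $e^{\vartheta\delta}$) gives the same cancellation more directly: the factor $(s-t+\delta)$ kills the boundary term at $s=t-\delta$, the exponential produces the $-\vartheta(W-V)$ term, and the identity $(\sqrt2-1)(\sqrt2+1)=1$ lets you close the pointwise bound $a_i-\delta e^{\vartheta\delta}b_i\ge\vartheta\gamma_i$ without the intermediate constants $\kappa,\beta,c_i$. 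Both routes use the two smallness conditions on $\delta$ in the same way; yours is somewhat shorter, while the paper's double-integral form makes the Lyapunov--Krasovskii structure (integrating over all delays $s\in[0,\delta]$) more transparent.
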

\begin{proof}
Let us define $\beta=\min_{i=1,\dots,n}\left(\frac{a_i}{\sqrt{2}\gamma_i}\right)$, $c_i=\frac{b_i}{a_i}\frac{e^{\beta \delta}-1}{\beta}$, $\kappa_i=1-c_i$ and $\kappa=\min_i \kappa_i$. Using the inequality
\begin{equation}
 e^x\le 1+\sqrt{2}x,\quad\forall x\in\left[0,\frac{1}{2}\right],
\end{equation}
leads to 
\begin{align}
 0<c_i\le&\frac{b_i}{a_i}\left(\frac{1+\sqrt{2}\beta\delta-1}{\beta}\right),\text{ since }\beta\delta<\frac{1}{2},\\
 \le&\frac{b_i}{a_i}\sqrt{2}\delta,\\
 \le&\frac{b_i}{a_i}\sqrt{2}\left(\frac{\sqrt{2}-1}{2}\frac{a_i}{b_i}\right),\text{ using eq. \eqref{eqn_lemma_continuous_discret_time_001}},\\
 \le&1-\frac{1}{\sqrt{2}},\label{eqn_lemma_continuous_discret_time_002}
\end{align}
and $\frac{1}{\sqrt{2}}\le\kappa\le1$.\\
Consider the candidate Lyapunov functional
  \begin{equation*}
 W(v_t)=
 \sum_{i=1}^n\left(\gamma_i v_i^2(t)
 +b_i\int_0^{\delta}\int_{t-s}^te^{\kappa\beta(\nu-t+s)} v_i^2(\nu)d\nu ds\right)
\end{equation*}
with $v_t(s)=v(t+s)$, $s\in[-\delta,0]$ and $v=\begin{bmatrix} v_1&\dots&v_n\end{bmatrix}^T$. Then, using the equality $\left[W(v_t)-\sum_i\left(\gamma_i v_i^2(t)\right)\right]=\sum_i\left(b_i \int_0^{\delta}\int_ {t-s}^te^{\kappa\beta(\nu-t+s)}v_i^2(\nu)d\nu ds\right)$, one gets
  \begin{align}
\dot W(v_t)=& \frac{d}{dt}\left(\sum_{i=1}^n\gamma_i v_i^2(t)\right)
 -\kappa\beta\left[W(v_t)-\left(\sum_{i=1}^n\gamma_i v_i^2(t)\right)\right]
 +\int_0^{\delta}\sum_{i=1}^n\left(e^{\kappa\beta s}b_i v_i^2(t)-b_i v_i^2(t-s)\right)ds.
 \end{align}
 Using inequality \eqref{eqn_lemma_continuous_discret_time_000} and the equality $\int_0^{\delta}v_i^2(t-s)ds=\int_{t-\delta}^{t}v_i^2(s)ds$, one obtains
   \begin{align}
\dot W(v_t)\le & -\left(\sum_{i=1}^na_i v_i^2(t)\right)
-\kappa\beta\left[W(v_t)-\left(\sum_{i=1}^n\gamma_i v_i^2(t)\right)\right]+\left(\frac{e^{\kappa\beta\delta}-1}{\kappa\beta}\right)\left(\sum_{i=1}^nb_i v_i^2(t)\right)+k,\\
 \le& -\kappa\beta W(v_t) +k +\sum_{i=1}^na_i\left(-1+\frac{\kappa\beta\gamma_i}{a_i}+\frac{b_i}{a_i}\left(\frac{e^{\kappa\beta\delta}-1}{\kappa\beta}\right)\right)v_i^2(t),\\
 \le&-\kappa\beta W(v_t) +k+\sum_{i=1}^na_i\left(-1+\frac{\kappa}{\sqrt{2}}+c_i\right)v_i^2(t),
\end{align}
 where the latter inequality is obtained by using the fact that $\beta\gamma_i/a_i\le 1/\sqrt{2}$ by definition of $\beta$ and the inequality $\frac{b_i}{a_i}\left(\frac{e^{\kappa\beta\delta}-1}{\kappa\beta}\right)\le \frac{b_i}{a_i}\left(\frac{e^{\beta\delta}-1}{\beta}\right)=c_i$ since the function $x\mapsto \left(\frac{e^{x\delta}{-1}}{x}\right)$ is increasing over $(0,+\infty)$ and $\kappa\in(0,1)$. Further using the inequality $\kappa\le\kappa_i=1-c_i$, $i=1,\dots,n$, gives
\begin{align}
 \dot W(v_t)\le& -\kappa\beta W(v_t)+k
 -\sum_{i=1}^n\left(1-\frac{1}{\sqrt{2}}\right)a_i\kappa v_i^2(t),\nonumber\\
 \le&-\kappa\beta W(v_t)+k.
\end{align}
Finally, since $\kappa\beta\ge \frac{1}{2}\min_i\left(\frac{a_i}{\gamma_i}\right)\stackrel{\triangle}{=}\vartheta$, using the comparison lemma \cite[lemma 3.4 p. 102]{Khalil2002} gives
\begin{align}
 W(v_t)\le& W(v_0)e^{-\vartheta t}+\frac{k}{\vartheta}\le\left(\sum_{i=1}^n\gamma_iv_i^2(0)\right)e^{-\vartheta t}+\frac{k}{\vartheta},
\end{align}
where the latter inequality is obtained by using the fact that $v_i^2(t)=0$ for $t<0$, $i=1,\dots,n$.
\end{proof}
\begin{lemma}\cite[Lemma 4]{Ajwad2019}\label{lemma_technical_inequalities}
 \begin{enumerate}[(i)]
  \item Let $M\in\mathbb R^{n\times n}$ be a symmetric positive definite matrix. Then, one has $x^TMy\le \sqrt{x^TMx}\sqrt{y^TMy}$ for all $x,y\in\mathbb R^n$.
  \item Let $M\in\mathbb R^{n\times n}$ be a symmetric matrix. Then, one has $\rho_{\min}(M)x^Tx\le x^TMx\le\rho_{\max}(M)x^Tx$ for all $x\in\mathbb R^n$.
  \item One has $\sum_{i=1}^n\sqrt{\alpha_i}\le\sqrt{n}\sqrt{\sum_{i=1}^n\alpha_i}$ for all $\alpha_i\ge0$.
  \item Let $A\in\R^{n\times n}$ be a symmetric positive definite matrix and $B\in\R^{m\times m}$ be a symmetric semi-definite matrix, then the following inequality holds
  \begin{equation}
   \rho_{\min}(A)I_n\otimes B\le A\otimes B\le \rho_{\max}(A)I_n\otimes B.
  \end{equation}
 \end{enumerate}
\end{lemma}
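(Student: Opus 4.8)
The plan is to dispatch the four items separately, each reducing to a standard fact. For (i), since $M>0$ it has a positive-definite symmetric square root $M^{1/2}$, so $x^TMy=(M^{1/2}x)^T(M^{1/2}y)$ and the ordinary Cauchy--Schwarz inequality on $\R^n$ gives $x^TMy\le\|M^{1/2}x\|\,\|M^{1/2}y\|=\sqrt{x^TMx}\,\sqrt{y^TMy}$. Equivalently, $(x,y)\mapsto x^TMy$ is an inner product and the claimed bound is weaker than the absolute-value version of Cauchy--Schwarz, hence holds a fortiori.

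For (ii), I would use the spectral theorem: write $M=U\Lambda U^T$ with $U$ orthogonal and $\Lambda=\mathrm{diag}(\lambda_1,\dots,\lambda_n)$, and set $z=U^Tx$. Then $x^TMx=\sum_{i=1}^n\lambda_i z_i^2$ and $x^Tx=z^Tz=\sum_{i=1}^n z_i^2$; bounding each $\lambda_i$ between $\rho_{\min}(M)$ and $\rho_{\max}(M)$ and summing yields the double inequality. For (iii), I would apply Cauchy--Schwarz to the vectors $(\sqrt{\alpha_1},\dots,\sqrt{\alpha_n})$ and $(1,\dots,1)$, obtaining $\sum_{i=1}^n\sqrt{\alpha_i}=\sum_{i=1}^n\sqrt{\alpha_i}\cdot 1\le\bigl(\sum_{i=1}^n\alpha_i\bigr)^{1/2}\bigl(\sum_{i=1}^n 1\bigr)^{1/2}=\sqrt{n}\,\sqrt{\sum_{i=1}^n\alpha_i}$ (this is also just concavity of the square root, i.e. Jensen's inequality).

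For (iv), the two ingredients are that the Kronecker product is bilinear and that $P\otimes Q\ge 0$ whenever $P\ge 0$ and $Q\ge 0$ (the eigenvalues of $P\otimes Q$ being the pairwise products of those of $P$ and $Q$). Since $A-\rho_{\min}(A)I_n\ge 0$ and $B\ge 0$, we get $0\le (A-\rho_{\min}(A)I_n)\otimes B=A\otimes B-\rho_{\min}(A)(I_n\otimes B)$, which is the lower bound; the upper bound follows identically from $\rho_{\max}(A)I_n-A\ge 0$. The only step requiring a little care is the bilinearity identity $(A-cI_n)\otimes B=A\otimes B-c(I_n\otimes B)$ together with the positive-semidefiniteness of the Kronecker product of semidefinite matrices — but neither is delicate, so there is in fact no serious obstacle here: the statement is a toolbox of elementary linear-algebra estimates collected for use in the sequel.
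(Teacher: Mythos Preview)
Your proof is correct; each of the four items is dispatched by the expected elementary argument. Note that the paper itself does not prove this lemma at all: it is quoted verbatim from \cite[Lemma 4]{Ajwad2019} and used as a toolbox, so there is no ``paper's own proof'' to compare against. Your write-up is exactly the kind of short justification one would give if asked to supply details.
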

\section{Problem statement}\label{section_model}
\subsection{Dynamical model of the agents}
One considers a group of $N$ agents whose dynamics are nonlinear. More precisely, the $i$-th agent dynamics, $i=1,\dots,N$, are given by
 \begin{align}
    \dot x_i^{(1)}(t)&=x_i^{(2)}(t)+\varphi_1\left(t,x_i^{(1)}(t)\right)+\varepsilon_i^{(1)}(t),\label{eqn_general_dynamic}\\
  &~~\vdots\nonumber\\
  \dot x_i^{(q-1)}(t)&=x_i^{(q)}(t)+\varphi_{q-1}\left(t,x_i^{(1)}(t),\dots,x_i^{(q-1)}(t)\right)\nonumber\\
  &\quad+\varepsilon_i^{(q-1)}(t),\nonumber\\
  \dot x_i^{(q)}(t)&=u_i(t)+\varphi_{q}\left(t,x_i^{(1)}(t),\dots,x_i^{(q)}(t)\right)+\varepsilon_i^{(q)}(t),\nonumber\\
  y_i&=x_i^{(1)}+w_i,\nonumber
 \end{align}
where $x_i^{(k)}\in\mathbb R^m$, $k=1,\dots,q$, is the state,  $u_i\in\mathbb R^m$ is the input, $y_i\in\mathbb R^m$ the output, $\varepsilon_i^{(k)}:\R\rightarrow \R^m$, $k=1,\dots,q$, the dynamics uncertainties, $w_i:\R\rightarrow \R^m$ the noise and $\varphi_k:\mathbb R\times \mathbb R^{km}\rightarrow \mathbb R^m$, $q,m\in\mathbb N$ the nonlinearities.\\
System (\ref{eqn_general_dynamic}) is then made up of $q$ blocs, each one of size $m$.\\
Let us denote $x_i=\begin{pmatrix}\left(x_i^{(1)}\right)^T&\cdots&\left(x_i^{(q)}\right)^T \end{pmatrix}^T\in\mathbb R^n$ the state of the $i$-th agent, with $n=qm$, and $\varepsilon_i=\begin{pmatrix}\left(\varepsilon_i^{(1)}\right)^T\dots&\left(\varepsilon_i^{(q)}\right)^T \end{pmatrix}^T$. System (\ref{eqn_general_dynamic}) can be written in the following compact form 
\begin{equation}\label{eqn_general_system_compact_form}
 \begin{cases}
  \dot x_i(t)=Ax_i(t)+\varphi(t,x_i(t))+Bu_i(t)+\varepsilon_i(t),\\
  y_i=Cx_i+w_i,
 \end{cases}
\end{equation}
with $A\in\mathbb R^{n\times n}$, $B\in\mathbb R^{n\times m}$ and $C\in\mathbb R^{m\times n}$ the corresponding matrices and $\varphi=\begin{pmatrix} \varphi_1^T&\dots&\varphi_q^T\end{pmatrix}^T$.
\begin{remark}
 The class of systems considered here is closely related to the class of uniformly observable systems, which has been introduced in \cite{Gauthier1981} for Single Input Single Output (SISO) systems. Indeed, the system is composed of a chain of integrator and the nonlinear part has a triangular structure. While there exists canonical form for SISO systems, no such canonical form exists for Multi Input Multi Output systems. Nevertheless, several classes of uniformly observable systems have been considered in the literature such as in \cite{Farza2004,Farza2011}. The class of systems considered here is the same as in \cite{Farza2004} (where more details are given about possible required change of coordinates) except that the input is supposed to act linearly and without zero dynamics.
\end{remark}
The nonlinear function $\varphi$ is supposed to verify the following assumption. \begin{assumption}\label{assumption_nonlinear_lipschitz}
 The function $\varphi$ is globally Lipschitz in $x$ uniformly with respect to $t$, that is there exists $L_{\varphi}>0$ such that
 \begin{equation}\label{eqn_assumption_lipschiz}
  \Vert \varphi(t,x_1)-\varphi(t,x_2)\Vert\le L_{\varphi}\Vert x_1-x_2\Vert,
 \end{equation}
for all $t\in\mathbb R$ and $x_1,x_2\in\mathbb R^n$.
\end{assumption}
The aim is to design a protocol such that all the agents converge toward a leader, denoted agent $0$, whose dynamics are given by
\begin{align}\label{eqn_general_leader}
 \dot x_0(t)&=A x_0(t)+\varphi(t,x_0(t))+\varepsilon_0(t),\\
 y_0&=C x_0+w_0.\nonumber
\end{align}
\begin{remark}
It should be noted that the uncertainties on the dynamics of the leader can also represent an unknown input. Indeed, given the structure of the dynamics (see equation \eqref{eqn_general_dynamic}), the uncertainty on the dynamics of $x_0^{(q)}$ can be decomposed as an unknown leader input $u_0$ on one part and some perturbations on the dynamics on the other part.
\end{remark}
The dynamics uncertainties and noises of the agents and the leader are supposed to be uniformly bounded.
\begin{assumption}\label{assumption_boundedness_uncertainties_noise}
 There exist constants $\delta_{\varepsilon}^{1},\dots,\delta_{\varepsilon}^q\ge0$ and $\delta_w\ge 0$ such that 
 \begin{align}
  \Vert \varepsilon_i^{(k)}(t)\Vert&\le \delta_{\varepsilon}^k,\,\forall t\ge0,\, i=0,\dots,N,\, k=1,\dots,q,\\
  \Vert w_i(t)\Vert&\le \delta_{w},\, \forall t\ge 0,\, i=0,\dots,q.
 \end{align}
 \end{assumption}
Due to the uncertainties, the agents cannot converge exactly toward the leader but, instead, in some ball around the leader. The problem to be solved here is defined more precisely below. 
\begin{definition}
The leader-following consensus problem is said to be exponentially practically solved if there exist $\alpha,\beta>0$ and $\gamma\ge 0$ such that $\Vert x_i(t)-x_0(t)\Vert\le \alpha e^{-\beta t}+\gamma$, for all $i=1,\dots,N$.
\end{definition}

\subsection{Communication constraints}
The communication topology between followers $i=1,\dots,N$ is denoted $\mathcal G$ and its adjacency and  Laplacian matrices $\mathcal A=(a_{ij})$ and $\mathcal L$ respectively.\\
The output of the leader is supposed to be transmitted only at some agents. More precisely, one defines $\mathcal D=\text{diag}(d_1,\dots,d_N)$, where $d_i=1$ if agent $i$ has access to the output of the leader and $d_i=0$ otherwise.\\
One defines the general index $\nu_{ij}$ for $i=1,\dots,N,j=0,\dots,N$ as $\nu_{ij}=1$ if agent $i$ receives the output of agent $j$ and $0$ otherwise. This means that if $\nu_{ij}=1$, then the output of agent $j$ is transmitted to agent $i$ at time instants $\left(t_k^{i,j}\right)_{k\in\mathbb N}$. These sampling instants are supposed to verify
 \begin{equation}\label{eqn_condition_time_sequence_transmission}
 t_0^{i,j}<t_1^{i,j}<\dots<t_k^{i,j}<\dots\text{ and }\tau_m<\left\vert t_{k+1}^{i,j}-t_k^{i,j}\right\vert<\tau_M,
\end{equation}
for all $k\in\mathbb N$ and $\tau_m,\tau_M>0$. Note that the lower bound $\tau_m$ is just considered in order to explicitly avoid the Zeno phenomena. It is not a restrictive bound since it can be taken as small as desired.\\
\begin{example}\label{example_1}
Consider a MAS composed of a leader (denoted $0$) and two agents (denoted $1$ and $2$) such as depicted on Figure \ref{fig_graphe_exemple}.\\
Agent 1 receives the transmitted output $y_0$ of the leader at time instants $t_k^{1,0}$ for $k\in\mathbb N$. This is used to reconstruct the state of the leader in continuous time. Agent 1 also uses its own output at time instants $t_k^{1,1}$ for $k\in\mathbb N$, to reconstruct its own state in continuous time. Similarly, Agent 2 receives the output of agent 1 at time instants $t_k^{2,1}$ and uses its own output at time instants $t_k^{2,2}$.\\
Note that the time sequences $\left(t_k^{1,0}\right)_{k\in\mathbb N}$,  $\left(t_k^{1,1}\right)_{k\in\mathbb N}$, $\left(t_k^{2,1}\right)_{k\in\mathbb N}$,  $\left(t_k^{2,2}\right)_{k\in\mathbb N}$ can be chosen freely, and in particular independently from each other, as long as they verify (\ref{eqn_condition_time_sequence_transmission}).
 \begin{figure}[!ht]
 \begin{center}
 \scalebox{1}{
  \begin{tikzpicture} [,>=stealth,line width=0.8pt]
\draw (0,4)    node (1) [circle,draw,fill=gray!20] {\Large$\bf0$};   
\draw (3,4)    node (2) [circle,draw,fill=gray!20] {\Large$\bf 1$};
\draw (3,2)    node (3) [circle,draw,fill=gray!20] {\Large$\bf 2$};
\draw[fill=gray!00] (1)--(2) node[midway,above]{\footnotesize{$y_0\left(t_k^{1,0}\right)$}};
\draw[fill=gray!00] (2)--(3) node[midway,right]{\footnotesize{$y_1\left(t_k^{2,1}\right)$}};
\draw [->] (1) to (2); 
\draw [->] (2) to (3); 
 \draw [->] (2) .. controls ++(1,-0.5) and ++(1,0.5) .. (2); 
 \draw [->] (3) .. controls ++(1,-0.5) and ++(1,0.5) .. (3);
 \draw (4.6,4) node {\footnotesize$y_1\left(t_k^{1,1}\right)$};
 \draw (4.6,2) node {\footnotesize$y_2\left(t_k^{2,2}\right)$};
\end{tikzpicture}}
\end{center}
\caption{Data transmission along the directed graph $\mathcal G$}
\label{fig_graphe_exemple}
 \end{figure}
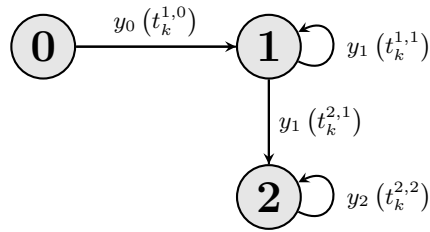
\end{example}
One denotes  $\tilde{\mathcal G}$ the digraph corresponding to all the agents $i=1,\dots,N$ together with the leader. The Laplacian matrix $\tilde{\mathcal L}$ of $\tilde {\mathcal G}$ is given by
\begin{equation}
 \tilde{\mathcal L}=\begin{pmatrix}0&0_{1\times N}\\-\tilde d&\mathcal L+\mathcal D\end{pmatrix}\stackrel{\triangle}{=}\begin{pmatrix}0&0_{1\times N}\\-\tilde d&\mathcal H\end{pmatrix},
\end{equation}
where $\tilde d=\begin{pmatrix} d_1&\dots&d_N\end{pmatrix}^T$.\\
One needs the following result.
\begin{lemma}\label{lemma_nonsingular_m_matrix}\cite[Lemma 9]{Song2012}
 The matrix $\mathcal H$ is a non-singular M-matrix if and only if the communication topology $\tilde{\mathcal G}$ has a directed spanning tree.
\end{lemma}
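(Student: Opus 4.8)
The plan is to combine the sign pattern of $\mathcal H=\mathcal L+\mathcal D$ with a reachability argument on $\tilde{\mathcal G}$. First I would record two elementary facts: since $\mathcal L_{ij}=-a_{ij}\le 0$ for $i\ne j$, the off-diagonal entries of $\mathcal H$ are non-positive; and since $\mathcal H_{ii}=\sum_{j\ne i}a_{ij}+d_i$, the Gershgorin circle theorem places every eigenvalue of $\mathcal H$ in $\bigcup_i\{z:|z-\mathcal H_{ii}|\le\sum_{j\ne i}a_{ij}\}\subset\{z:\operatorname{Re}z\ge 0\}$. Hence, in view of the definition of an M-matrix, it suffices to prove that $\mathcal H$ is non-singular if and only if $\tilde{\mathcal G}$ has a directed spanning tree; the positivity of the diagonal entries of $\mathcal H$ then comes for free, since a spanning tree forces every follower to have a parent or a link to the leader. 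Equivalently, expanding $\det(\lambda I_{N+1}-\tilde{\mathcal L})$ along its first row gives $\lambda\det(\lambda I_N-\mathcal H)$, so the issue is whether $0$ is a simple eigenvalue of $\tilde{\mathcal L}$.

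For the \emph{if} part I would assume $\tilde{\mathcal G}$ has a directed spanning tree and observe that its root must be the leader, because the leader has no incoming edge and thus cannot be reached from any other node. Then I would take any $x\in\mathbb C^N$ with $\mathcal H x=0$, set $S=\{i:|x_i|=\|x\|_\infty\}$, and use weak diagonal dominance: for $i\in S$, the identity $\big(\sum_{j\ne i}a_{ij}+d_i\big)x_i=\sum_{j\ne i}a_{ij}x_j$ together with $|x_j|\le|x_i|$ forces $d_i=0$ and $|x_j|=\|x\|_\infty$ for every parent $j$ of $i$, so every parent of a node of $S$ again lies in $S$. Walking a directed path from the leader to some $i_1\in S$ backwards, all of its follower vertices fall into $S$; in particular the first follower on that path lies in $S$ yet receives a leader link, i.e. its $d$-entry equals $1$, contradicting $d=0$ on $S$ unless $x=0$. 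Thus $\mathcal H$ is non-singular, hence a non-singular M-matrix.

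For the \emph{only if} part I would argue by contraposition. If $\tilde{\mathcal G}$ has no directed spanning tree, then, the leader being the only possible root, the set $\bar R$ of followers not reachable from the leader is non-empty, and no edge of $\tilde{\mathcal G}$ can enter $\bar R$ from outside, for otherwise the head of that edge would be reachable from the leader. Consequently $d_i=0$ and $a_{ij}=0$ for all $i\in\bar R$ and $j\notin\bar R$, so after reordering the followers with those of $\bar R$ first, $\mathcal H$ becomes block lower triangular with leading block $\mathcal H[\bar R,\bar R]$, and a short row-sum computation using exactly these two vanishing conditions gives $\mathcal H[\bar R,\bar R]\mathbf 1=0$. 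Therefore $\mathcal H[\bar R,\bar R]$, and with it $\mathcal H$, is singular.

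The steps I expect to require genuine care are the row-sum computation for $\mathcal H[\bar R,\bar R]$ and the verification that $\bar R$ receives no incoming edges, in the \emph{only if} part, together with the treatment of a possibly complex null vector through the maximum-modulus set $S$ in the \emph{if} part; the remainder is bookkeeping. As a shortcut one could instead invoke the classical theorem that the Laplacian of a digraph has a simple zero eigenvalue precisely when the digraph contains a directed spanning tree, and apply it to $\tilde{\mathcal L}$ through the factorization recorded above.
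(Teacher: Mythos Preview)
The paper does not prove this lemma; it simply cites it from Song et al.\ (2012) without argument. Your sketch therefore supplies more than the paper does: a correct self-contained proof combining the $Z$-matrix structure of $\mathcal H$ with a reachability analysis on $\tilde{\mathcal G}$. Both directions are sound, including the maximum-modulus propagation in the \emph{if} part and the block-triangular/row-sum argument in the \emph{only if} part.

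The one step you leave implicit is the passage from ``Gershgorin places the spectrum in $\{\operatorname{Re}z\ge 0\}$ and $0$ is not an eigenvalue'' to ``every eigenvalue has strictly positive real part,'' which is what the paper's definition of an M-matrix actually demands. Gershgorin plus non-singularity alone does not exclude nonzero purely imaginary eigenvalues. What closes this is the standard Perron--Frobenius fact that for a $Z$-matrix the eigenvalue of minimal real part is real: writing $\mathcal H=sI-B$ with $s=\max_i\mathcal H_{ii}$ and $B\ge 0$, one has $s-\rho(B)\in\sigma(\mathcal H)$ and $\operatorname{Re}\lambda\ge s-\rho(B)$ for every $\lambda\in\sigma(\mathcal H)$, so non-singularity forces $s-\rho(B)>0$. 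Since you frame the piece as a sketch and explicitly invoke ``the definition of an M-matrix'' at that juncture, this is a minor omission rather than a genuine gap.
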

\begin{assumption}\label{assumption_topology}
 The communication topology $\tilde{\mathcal G}$ between the agents and the leader contains a directed spanning tree.
\end{assumption}
\begin{remark}\label{remark_existence_omega}
 If assumption \ref{assumption_topology} holds true, then according to lemma  \ref{lemma_nonsingular_m_matrix} and lemma \ref{lemma_m_matrix}, there exists a positive vector $\omega=\begin{pmatrix} \omega_1&\dots\omega_N \end{pmatrix}$ such that $\Omega\mathcal H+\mathcal H^T\Omega>0$ where $\Omega=diag(\omega_1,\dots,\omega_N)$. Then, in the rest of the paper, one denotes
 \begin{align}
  \varrho&=\rho_{\min}\left( \Omega\mathcal H+\mathcal H^T\Omega \right),\label{eqn_def_varrho}\\
  \omega_{\min}&=\min(\omega_1,\dots,\omega_N),\label{eqn_def_omega_min}\\
  \omega_{\max}&=\max(\omega_1,\dots,\omega_N).\label{eqn_def_omega_max}
 \end{align}
\end{remark}

\section{Main result}\label{section_main_part}
\subsection{Consensus protocol}
The proposed protocol is given, for $i=1,\dots,N$, by
\begin{align}
 u_i(t)&=d_i\bar c K^c\Gamma_{\lambda}(\hat x_{i,0}(t)-\hat x_{i,i}(t))
+\bar cK^c\Gamma_{\lambda}\sum_{j=1}^N a_{ij}(\hat x_{i,j}(t)-\hat x_{i,i}(t)),\quad\forall t\ge0,\label{eqn_protocol_consensus_control}
\end{align}
where $\hat x_{i,j}$ is the estimate of $x_j$ by agent $i$ and is given, for $t\in\big[t_{k}^{i,j},t_{k+1}^{i,j}\big)$, $k\in\mathbb N$, by
\begin{equation} 
 \dot{\hat x}_{i,j}(t)=A\hat x_{i,j}(t)+\varphi(t,\hat x_{i,j}(t))-\theta\Delta_{\theta}^{-1}K^oz_{i,j}(t),\label{eqn_protocol_consensus_observer}  
\end{equation}
  with 
  \begin{equation}\label{eqn_def_z_ij}
   z_{i,j}(t)=e^{-\theta K_1^o(t-t_k^{i,j})}\left(C\hat x_{i,j}\left(t_k^{i,j}\right)-y_j\left(t_k^{i,j}\right)\right),
  \end{equation}
 where $\bar c,\lambda,\theta>0$ are the tuning parameters, and
\begin{align}
\Gamma_{\lambda}&=\text{diag}\left(\lambda^qI_m,\lambda^{q-1}I_m,\dots,\lambda I_m\right),\\
\Delta_{\theta}&=\text{diag}\left(I_m,\frac{1}{\theta} I_m,\dots,\frac{1}{\theta^{q-1}}I_m\right),\\
K^o&=P^{-1}C^T, \qquad K^c=B^TQ,
\end{align}
where $q$ is the number of blocs of system (\ref{eqn_general_dynamic}), $P,Q\in\mathbb R^{n\times n}$ are the symmetric positive definite solutions of the following matrix equalities
\begin{align}
 P+PA+A^TP&=C^TC,\label{eqn_def_P}\\
 Q+QA+A^TQ&=QBB^TQ,\label{eqn_def_Q}
\end{align}
(see \cite{Bedoui2008} for more details).
\begin{remark}
Given the structure by block of $A,B,C$, one can show directly (see \cite{Farza2011} section 2.2 and \cite{Bedoui2008} section 3  for more details) that $K^o$ and $K^c$ can be written as follows
\begin{align}
K^o&=\begin{pmatrix}K^o_1I_m\dots K^o_qI_m\end{pmatrix}^T,\label{eqn_explicit_expression_K_o1}\\
K^c&=\begin{pmatrix}K^c_1 I_m\dots K^c_qI_m\end{pmatrix},\label{eqn_explicit_expression_K_c1}
\end{align}
where $K^o_1,\dots,K^o_q,K^c_1,\dots,K^c_q\in\mathbb R$ are equal to 
\begin{align}
 K^o_i&= \begin{pmatrix} q\\i \end{pmatrix},\quad K^c_i=\begin{pmatrix} q\\q-i+1\end{pmatrix},\quad i=1,\dots,q,\label{eqn_explicit_expression_K_o_K_c}
\end{align}
and $\begin{pmatrix}n\\k\end{pmatrix}$ are the binomial coefficients.
\end{remark}
\begin{remark}\label{remark_output_error_prediction}
 The term $z_{i,j}(t)$, defined in (\ref{eqn_def_z_ij}) corresponds to a prediction of the output error $\left(C\hat x_{i,j}(t)-y_j(t)\right)$ on each interval $\big[t_k^{i,j},t_{k+1}^{i,j}\big)$ (see \cite{Farza2014} section III.B for more details).
\end{remark}
\begin{remark}
The proposed structure presents some advantages when dealing with delays and dropouts. Indeed, each control input is fed by the corresponding local observers. Then, if the measurements are time-stamped, as soon as the measurement is received, even if delayed, the estimation of the current state can be provided by making the observer computation run faster to compensate.
\end{remark}
\begin{example}
Consider the same topology as in Example \ref{example_1} and assume that the dynamics of the agents are ruled by a second order system, more precisely $\dot x_i^{(1)}=x_i^{(2)}$, $\dot x_i^{(2)}=u_i+\varphi_2(x_i)$, $y_i=x_i^{(1)}\in\R$. It means here that there are two blocks ($q=2$) and the dimension of each agent is equal to $n=2$. One further has $d_1=1$, $d_2=0$, $a_{11}=a_{12}=a_{22}=0$ and $a_{21}=1$.\\
Then, system (\ref{eqn_general_system_compact_form}) is characterized by $A=\begin{pmatrix}0&1\\0&0\end{pmatrix}$, $B=\begin{pmatrix}0\\1\end{pmatrix}$ and $C=\begin{pmatrix}1&0\end{pmatrix}$. Furthermore, the solution of equations (\ref{eqn_def_P}) and (\ref{eqn_def_Q}) are equal to $P=\begin{pmatrix}1&-1\\-1&2 \end{pmatrix}$ and $Q=\begin{pmatrix}1&1\\1&2 \end{pmatrix}$ and the gains are given by $K^o=\begin{pmatrix}2\\1 \end{pmatrix}$, $K^c=\begin{pmatrix}1&2 \end{pmatrix}$, $\Gamma_{\lambda}=\begin{pmatrix}\lambda^2&0\\0&\lambda \end{pmatrix}$, $\Delta_{\theta}=\begin{pmatrix} 1&0\\0&\frac{1}{\theta}\end{pmatrix}$.\\
Given the topology of the considered MAS:
\begin{itemize}
 \item Agent $1$ has to reconstruct the state of the leader and its own state. Then agent $1$ has to run two observers:
 \begin{align*}
  \dot{\hat x}_{1,0}(t)&=A\hat x_{1,0}(t)+\begin{pmatrix}0\\\varphi_2(\hat x_{1,0}(t)) \end{pmatrix}
  -\theta\Delta_{\theta}^{-1}K^o e^{-2\theta\left(t-t_k^{1,0}\right)}\left(\hat x_{1,0}^{(1)}\left(t_k^{1,0}\right)-y_0\left(t_k^{1,0}\right)\right),
  \quad\text{for }t\in\Big[t_k^{1,0},t_{k+1}^{1,0}\Big),\\
  \dot{\hat x}_{1,1}(t)&=A\hat x_{1,1}(t)+\begin{pmatrix}0\\\varphi_2(\hat x_{1,1}(t)) \end{pmatrix}
  -\theta \Delta_{\theta}^{-1}K^oe^{-2\theta\left(t-t_k^{1,1}\right)}\left(\hat x_{1,1}^{(1)}\left(t_k^{1,1}\right)-y_1\left(t_k^{1,1}\right)\right),
  \quad\text{for }t\in\Big[t_k^{1,1},t_{k+1}^{1,1}\Big),
 \end{align*}
where $\hat x_{1,0}$ and $\hat x_{1,1}$ are the estimates of $x_0$ and $x_1$ respectively. The input of agent $1$ is then given by:
\begin{equation*}
 u_1(t)=2\bar c\lambda^2\left(\hat x_{1,0}^{(1)}(t)-\hat x_{1,1}^{(1)}(t)\right)+\bar c\lambda\left(\hat x_{1,0}^{(2)}(t)-\hat x_{1,1}^{(2)}(t)\right).
\end{equation*}
 \item Agent $2$ has to reconstruct the state of agent 1 and its own state. Similarly as for agent 1, agent $2$ has to run two observers: one whose state $\hat x_{2,1}$ is an estimate of $x_1$ and one whose state $\hat x_{2,2}$ is an estimate of $x_2$.The input of agent $2$ is then given by:
\begin{equation*}
 u_2(t)=2\bar c\lambda^2\left(\hat x_{2,1}^{(1)}(t)-\hat x_{2,2}^{(1)}(t)\right)+\bar c\lambda\left(\hat x_{2,1}^{(2)}(t)-\hat x_{2,2}^{(2)}(t)\right).
\end{equation*}
\end{itemize}
It should be noted that the only parameters which must be tuned are $\bar c,\lambda$ and $\theta$ since it is clear from (\ref{eqn_explicit_expression_K_o1})-(\ref{eqn_explicit_expression_K_o_K_c}) that $K^o$ and $K^c$ only depend on the structure of the system (i.e. the number of blocks $q$ and the size of each block $m$).
\end{example}

\subsection{Convergence result}
The convergence of the proposed consensus protocol is now analyzed.
\begin{theorem}\label{theorem_convergence_result}
 Consider the MAS (\ref{eqn_general_dynamic})-(\ref{eqn_general_leader}) subject to Assumptions \ref{assumption_nonlinear_lipschitz}, \ref{assumption_boundedness_uncertainties_noise} and \ref{assumption_topology} and the consensus protocol (\ref{eqn_protocol_consensus_control})-(\ref{eqn_protocol_consensus_observer})-(\ref{eqn_def_z_ij}). If the tuning parameters $\lambda,\theta,\bar c\ge1$ are chosen such that
 \begin{align*}
 \bar c&\ge c^*=\max\left(\frac{\omega_{\max}}{\varrho},1\right),\\
 \lambda&\ge\lambda^*=24L_{\varphi}\sqrt{n}\max\left(\frac{\sqrt{\rho_M^{Q\omega}}}{\sqrt{\rho_m^{Q\omega}}}, \frac{\sqrt{\rho_{M}^P}}{\sqrt{\rho_{m}^P}}\right),\\
  \theta&\ge\lambda\bar c^2\xi^*,
  \end{align*}
  with $\xi^*\stackrel{\triangle}{=}36\Vert K^c\Vert^2(N+1)^3h_{\max}^2\max\left(\frac{\sqrt{\rho_{M}^P}}{\sqrt{\rho_{m}^P}},\frac{\rho_{M}^P}{\rho_m^{Q\omega}}, \frac{\rho_M^{Q\omega}}{\rho_{m}^P} \right)$, $\rho_m^P=\rho_{\min}(P)$,  $\rho_M^P=\rho_{\max}(P)$, $\rho_m^{Q\omega}=\omega_{\min}\rho_{\min}(Q)$, $\rho_M^{Q\omega}=\omega_{\max}\rho_{\max}(Q)$, $h_{\max}=\max_{ij}\vert\mathcal H_{ij}\vert$, $\varrho,\omega_{\min},\omega_{\max}$ respectively defined by \eqref{eqn_def_varrho}-\eqref{eqn_def_omega_min}-\eqref{eqn_def_omega_max}, and the upper bound on the sampling periods $\tau_M$ verifies 
 \begin{equation}
  \tau_M<\frac{\sigma^*}{\bar c(\theta+L_{\varphi})},
 \end{equation}
with $ \sigma^*=\left(\frac{\sqrt{2}-1}{8}\right)\frac{\min(\sqrt{\omega_{\min}},\sqrt{\rho_m^P})}{\Vert K^o\Vert(N+1)^{\frac{3}{2}}h_{\max}\sqrt{\rho_M^P}}$, then the consensus error $\Vert x_i-x_0\Vert$ verifies
\begin{align}\label{eqn_over_valuation_error_theorem}
 \Vert x_i-x_0\Vert\le& \chi_1\theta^{q-1} e^{-\frac{\lambda}{8}t}+\chi_2\lambda^{-1}\theta^q\left(\delta_w+\tau_M\delta_{\varepsilon}^1\right)
 +\chi_3\lambda^{-1}\left(\sum_{k=1}^q\theta^{q-k}\delta_{\varepsilon}^k\right),
\end{align}
where $\chi_1,\chi_2,\chi_3\ge0$ are independent of the tuning parameters $\lambda,\theta,\bar c$ and given by
\begin{align*}
 \chi_1&=\frac{\sqrt{\rho_M^{Q\omega}}}{\sqrt{\rho_m^{Q\omega}}}\sum_{i=1}^N\Vert x_i(0)-x_0(0)\Vert
 +\frac{\sqrt{\rho_M^P}}{\sqrt{\rho_m^{Q\omega}}}\sum_{i=1}^N\sum_{j=0}^N\Vert \hat x_{i,j}(0)-x_j(0)\Vert,\\
 \chi_2&=\frac{8\sqrt{\rho_M^P}(N+1)\Vert K^o\Vert}{\sqrt{\rho_m^{Q\omega}}},\\
 \chi_3&=\frac{8\left(2N\sqrt{\rho_M^{Q\omega}}+(N+1)\sqrt{\rho_M^P}\right)}{\sqrt{\rho_m^{Q\omega}}}.
\end{align*}
\end{theorem}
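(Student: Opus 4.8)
The plan is to run the classical cascade argument — a high-gain observer feeding a high-gain consensus feedback — adapted to asynchronous sampling, and to close it with Lemma~\ref{lemma_continuous_discrete_time}. First I would introduce the tracking errors $e_i=x_i-x_0$ ($i=1,\dots,N$) and the observation errors $\tilde x_{i,j}=\hat x_{i,j}-x_j$ ($i=1,\dots,N$, $j=0,\dots,N$), and pass to high-gain coordinates: $\bar\varepsilon_{i,j}=\Delta_\theta\tilde x_{i,j}$ for the observer part, which (using the shift structure of $A,C$, the explicit gains \eqref{eqn_explicit_expression_K_o1}--\eqref{eqn_explicit_expression_K_o_K_c}, and \eqref{eqn_def_P}) factors $\theta$ out of the linear part $\theta(A-K^oC)$, and an analogous rescaling of $e_i$ that factors $\lambda$ out of $\bar cK^c\Gamma_\lambda$. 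Using $\mathcal H\mathbf 1_N=\tilde d$ one rewrites the protocol \eqref{eqn_protocol_consensus_control} as $u_i=-\bar cK^c\Gamma_\lambda\sum_j\mathcal H_{ij}e_j+\bar cK^c\Gamma_\lambda(\text{terms linear in the }\tilde x_{i,\cdot})$, so that the stacked rescaled tracking error obeys $\dot{\bar e}=\lambda\big[(I_N\otimes A)-\bar c(\mathcal H\otimes BK^c)\big]\bar e+(\text{Lipschitz terms})+(\text{terms driven by }\bar\varepsilon)+(\text{leader disturbance }\varepsilon_i-\varepsilon_0)$, while each scaled observation error obeys $\dot{\bar\varepsilon}_{i,j}=\theta(A-K^oC)\bar\varepsilon_{i,j}+(\text{Lipschitz terms})+(\text{prediction error})+(\text{noise }w_j\text{ and }\varepsilon_j)$.

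The second and most delicate step is the treatment of the prediction term $z_{i,j}$. On each interval $[t_k^{i,j},t_{k+1}^{i,j})$ one has $\dot z_{i,j}=-\theta K_1^o z_{i,j}$ with $z_{i,j}(t_k^{i,j})=C\tilde x_{i,j}(t_k^{i,j})-w_j(t_k^{i,j})$, so $z_{i,j}(t)$ is a prediction of the noise-free output error $C\tilde x_{i,j}(t)$; writing $z_{i,j}(t)=C\tilde x_{i,j}(t)+\big(z_{i,j}(t)-C\tilde x_{i,j}(t)\big)$ and estimating the correction by integrating the error dynamics over an interval of length at most $\tau_M$, one bounds it by $\tau_M$ times a supremum over $[t-\tau_M,t]$ of $\|\dot{\tilde x}_{i,j}\|$ plus a $\delta_w$ contribution. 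Since $\dot{\tilde x}_{i,j}$ is itself controlled by $A$, the Lipschitz constant $L_\varphi$, the observer injection ($\propto\theta$) and — through $\dot x_j$ for the followers — by the control magnitude ($\propto\bar c\|K^c\Gamma_\lambda\|$), this yields, after rescaling, terms of order $\bar c(\theta+L_\varphi)\tau_M$ times the current scaled errors evaluated over the window $[t-\tau_M,t]$. These are exactly the integral terms $\int_{t-\delta}^t v_i^2$ of \eqref{eqn_lemma_continuous_discret_time_000} with $\delta=\tau_M$, and they are what forces the stated bound on $\tau_M$. Care is needed because $w_j$ is not differentiable, so it must be kept as a bounded term evaluated at the sampling instants rather than differentiated.

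Third, I would use the weighted Lyapunov functional $\mathcal V=\sum_{i=1}^N\omega_i\,\bar e_i^TQ\bar e_i+\mu\sum_{i=1}^N\sum_{j=0}^N\omega_i\,\bar\varepsilon_{i,j}^TP\bar\varepsilon_{i,j}$ for a suitably fixed $\mu>0$, with the $\omega_i$ furnished by Lemma~\ref{lemma_m_matrix} applied, via Lemma~\ref{lemma_nonsingular_m_matrix} and Assumption~\ref{assumption_topology}, to the nonsingular $M$-matrix $\mathcal H$. Differentiating along the closed loop: the equality \eqref{eqn_def_Q} together with $\Omega\mathcal H+\mathcal H^T\Omega\ge\varrho I_N$ and Lemma~\ref{lemma_technical_inequalities}(iv) produces a dominant dissipation of order $\bar c\lambda\varrho$ for the consensus part, which overrides the $\omega_{\max}$-weighted residual terms exactly when $\bar c\ge c^*=\max(\omega_{\max}/\varrho,1)$; the equality \eqref{eqn_def_P} produces a dissipation of order $\theta$ for the observer part. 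The Lipschitz cross terms, handled with Assumption~\ref{assumption_nonlinear_lipschitz} and Lemma~\ref{lemma_technical_inequalities}(i)--(iii), are absorbed into the consensus dissipation once $\lambda\ge\lambda^*$, and the coupling term through which $\bar\varepsilon$ enters the $\bar e$-equation is absorbed once the observer is fast enough, i.e. $\theta\ge\lambda\bar c^2\xi^*$. Collecting everything, $\mathcal V$ satisfies an inequality of the form \eqref{eqn_lemma_continuous_discret_time_000} in the scalar energies of the scaled errors, with $a_i$ of order $\lambda$, $b_i$ of order $\bar c^2(\theta+L_\varphi)^2\tau_M$ times fixed constants, $\gamma_i$ of order one, and $k$ collecting the $\delta_w$, $\tau_M\delta_\varepsilon^1$ and $\delta_\varepsilon^k$ contributions.

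Finally, I would check that $\tau_M<\sigma^*/(\bar c(\theta+L_\varphi))$ is precisely calibrated so that \eqref{eqn_lemma_continuous_discret_time_001} holds for $\delta=\tau_M$; Lemma~\ref{lemma_continuous_discrete_time} then gives $\mathcal V(t)\le\mathcal V(0)e^{-\vartheta t}+k/\vartheta$ with $\vartheta$ of order $\lambda$, the constant in the statement being $\lambda/8$. Reverting the rescalings — $\|x_i-x_0\|$ equals $\bar e_i$ times an inverse scaling contributing a power of $\lambda$, and the observation errors driving $\bar e$ revert through $\Delta_\theta^{-1}$ and hence carry powers of $\theta$ up to $\theta^{q-1}$ (respectively $\theta^q$ on the injection/noise terms) — and using Lemma~\ref{lemma_technical_inequalities}(ii) to pass between $\mathcal V$ and Euclidean norms, one obtains \eqref{eqn_over_valuation_error_theorem} with $\chi_1,\chi_2,\chi_3$ as stated, $\chi_1$ arising from $\sqrt{\mathcal V(0)/\rho_m^{Q\omega}}$ bounded in terms of the initial tracking and observation errors. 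The main obstacle is the second step: getting the prediction-error bound with the correct joint dependence on $\bar c$, $\theta$ and $L_\varphi$ — since $\dot{\tilde x}_{i,j}$ feeds back on both the control magnitude and the observer gain — while simultaneously tracking constants so that the resulting integral coefficients are small enough for Lemma~\ref{lemma_continuous_discrete_time} and the thresholds match the explicit $c^*,\lambda^*,\xi^*,\sigma^*$; the remainder is lengthy but routine quadratic bookkeeping.
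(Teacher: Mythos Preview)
Your overall architecture --- high-gain rescalings, Lyapunov functions built from $P$ and $Q$ with the $\Omega$-weighting from Lemma~\ref{lemma_m_matrix}, prediction-error estimate producing the delayed integral terms, closure via Lemma~\ref{lemma_continuous_discrete_time} --- matches the paper. But there is a structural gap.

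In your step~1 you write the scaled observation error as
\[
\dot{\bar\varepsilon}_{i,j}=\theta(A-K^oC)\bar\varepsilon_{i,j}+(\text{Lipschitz})+(\text{prediction error})+(\text{noise/disturbance}),
\]
omitting the term $-\tfrac{1}{\theta^{q-1}}Bu_j$. This term is not a nuisance: since the protocol transmits only outputs, agent $i$'s observer for $x_j$ does \emph{not} know $u_j$, so $u_j$ appears \emph{directly} in $\dot{\bar\varepsilon}_{i,j}$ (not merely through the prediction correction you isolate in step~2). Because $u_j$ is a linear combination of the $e_k$'s and the $\bar\varepsilon_{j,\cdot}$'s, this creates a feedback from the tracking error into the observer error: the structure is a genuine loop, not the one-way cascade your step~3 treats (``the coupling term through which $\bar\varepsilon$ enters the $\bar e$-equation is absorbed once $\theta\ge\lambda\bar c^2\xi^*$'' addresses only one direction).

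This is exactly why a Lyapunov combination $\bar V_c+\mu\bar V_o$ with a \emph{fixed} $\mu>0$ will not close: making $\mu$ large enough to absorb the $\bar\varepsilon\!\to\!\bar e$ cross term (coefficient $\sim\bar c\lambda\|\Gamma_\lambda\Delta_\theta^{-1}\|$) makes the reverse $\bar e\!\to\!\bar\varepsilon$ cross term (coefficient $\sim\bar c/\theta^{q-1}$) too large relative to the $\lambda$-dissipation, and vice versa. The paper resolves this by working not with $\bar V_c,\bar V_o$ but with their \emph{square roots}, and combining them with \emph{parameter-dependent} weights: setting $\theta=\xi\lambda$ and considering $\xi^{3/2}\sqrt{\bar V_c}+\theta^{q}\sqrt{\bar V_o}$, both cross couplings acquire a $1/\sqrt{\xi}$ (or $1/\xi$) factor and are simultaneously absorbed precisely when $\xi\ge\bar c^2\xi^*$, which is where the threshold $\theta\ge\lambda\bar c^2\xi^*$ really comes from. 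Lemma~\ref{lemma_continuous_discrete_time} is then applied with $v_1^2=\sqrt{\bar V_c}$, $v_2^2=\sqrt{\bar V_o}$ (not the quadratic energies themselves), $\gamma_1=\xi^{3/2}$, $\gamma_2=\theta^q$. Once you insert the missing $-\tfrac{1}{\theta^{q-1}}Bu_j$ term and switch to this weighted square-root combination, the rest of your plan goes through as you describe.
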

\begin{remark}
 Equation \eqref{eqn_over_valuation_error_theorem} states that even if the exponential practical consensus is reached, not all the uncertainties effect on the tracking error can be lowered through the tuning of $\lambda$ and $\theta$. Indeed, it is only possible for the uncertainties appearing on the last block of system \eqref{eqn_general_dynamic} (that is only $\varepsilon_i^{(q)}$ are non zero) and if $q\ge 2$. This is done by increasing $\lambda$ and $\theta$, since the corresponding term $\chi_3\lambda^{-1}\delta_{\varepsilon}^q$ goes to zero as $\lambda$ increases.\\
 In particular, if the leader has an unknown but bounded non zero input, then this input can be seen as a dynamic uncertainty on the last block dynamics $x_0^{(q)}$. Therefore, the tracking error can be set as low as desired by increasing $\lambda$ and $\theta$.
\end{remark}
\begin{remark}
 Theorem \ref{theorem_convergence_result} only provides sufficient conditions for the proposed consensus scheme (\ref{eqn_protocol_consensus_control})-(\ref{eqn_protocol_consensus_observer}). Indeed, the consensus may be obtained even if the bounds given by Theorem \ref{theorem_convergence_result} are not respected. Conservativeness is a general drawback when considering general classes of nonlinear systems with a Lyapunov approach. Nevertheless, the convergence analysis gives some useful hints for the tuning of the control parameters. In fact, the bounds $\sigma^*,c^*,\lambda^*,\xi^*$ only depend on the structure of the system (number of blocks $q$ and size of each block $m$) and the topology of the network $\tilde{\mathcal G}$. Given the inequalities in Theorem \ref{theorem_convergence_result}, the coupling force should be tuned first as for a classical consensus protocol. Then, $\lambda$ (representing the speed of convergence of the control part) should be chosen high enough to dominate the nonlinear Lipschitz term. The parameter $\theta$ of the observer part should be higher than the parameter of the control part $\lambda$. This is due to the fact that the input is not transmitted through the network, only the output is transmitted. Finally, the maximum bound on the sampling periods will have to be chosen small if the Lipschitz constant or if $\theta$ takes high values. This corresponds to a kind of Shannon condition: if the system is fast, the sampling periods have to be small.\\ 
 Furthermore, given the bounds on the consensus error given by equation \eqref{eqn_over_valuation_error_theorem}, it can be seen that increasing $\lambda$ and $\theta$ will decrease the effect of some uncertainties (those on the dynamics of $x_i^{(q)}$) but it will increase the effect of the noise, then a trade-off has to be considered between lowering the effect of some uncertainties and amplifying the noise.
\end{remark}
\begin{proof}
The proof of Theorem \ref{theorem_convergence_result} is split into three steps. First, new coordinates are considered in \emph{Step 1}. Then, in \emph{Step 2}, some candidate Lyapunov functions are defined and over-valuations of their derivatives are obtained. Finally, it is shown in \emph{Step 3}, that, if the different inequalities of Theorem \ref{theorem_convergence_result} are verified, then Lemma \ref{lemma_continuous_discrete_time} can be applied and the leader-following consensus problem is solved.\\
\noindent\emph{Step 1.} Let us define $e_i=\Gamma_{\lambda}(x_i-x_0)$ and $\bar e_{i,j}=\Delta_{\theta}(\hat x_{i,j}-x_j)$. Then, using the equalities $\Gamma_{\lambda}A\Gamma_{\lambda}^{-1}=\lambda A$, $\Gamma_{\lambda}B=\lambda B$, $\Delta_{\theta} A\Delta_{\theta}^{-1}=\theta A$, $\Delta_{\theta}B=\frac{1}{\theta^{q-1}}B$, $C\Delta_{\theta}^{-1}=C$ and the notation $\tilde{\varphi}(t,x,y)\stackrel{\triangle}{=}\varphi(t,x)-\varphi(t,y)$, one gets
\begin{align}
 \dot e_i&=\lambda A e_i+\Gamma_{\lambda}\left(\tilde{\varphi}(t,x_i,x_0)+\varepsilon_i-\varepsilon_0\right)+\lambda Bu_i,\label{eqn_proof_e_i}\\
 \dot{\bar e}_{i,j}&=\theta A\bar e_{i,j}+\Delta_{\theta}\tilde{\varphi}(t,\hat x_{i,j},x_j)-\frac{1}{\theta^{q-1}}B u_j
 -\theta K^o z_{i,j} -\Delta_{\theta}\varepsilon_j,\label{eqn_proof_bar_e_ij_01}\\
 &=\theta(A-K^oC)\bar e_{i,j}+\Delta_{\theta}\tilde{\varphi}(t,\hat x_{i,j},x_j)
 -\frac{1}{\theta^{q-1}}B u_j
 -\theta K^o(z_{i,j}-C\bar e_{i,j})
 -\Delta_{\theta}\varepsilon_j,
\end{align}
for $i=1,\dots,N$ and $j=0,\dots,N$, where $z_{i,j}$ is defined by (\ref{eqn_def_z_ij}) and the inputs are given by
\begin{align}
 &u_0=0_{m\times 1},\\
 &u_j= \bar cK^c\left(d_j\Gamma_{\lambda}\Delta_{\theta}^{-1}\bar e_{j,0}-\sum_{k=1}^N\mathcal H_{jk}(e_k+\Gamma_{\lambda}\Delta_{\theta}^{-1}\bar e_{j,k})\right),\nonumber
\end{align}
for $j=1,\dots,N$. Further denoting $\eta^c=\begin{bmatrix} e_1^T&\dots&e_N^T\end{bmatrix}^T$, the dynamics of the $e_i$ can be written in compact form as follows
\begin{equation}
 \dot{\eta}^c=\lambda(I_N\otimes A)\eta^c-\bar c\lambda[\mathcal H\otimes (BK^c)]\eta^c+\Phi_{\lambda}+\Psi_{\lambda,\varepsilon}+\Theta,\nonumber
 \end{equation}
with 
 $ \Phi_{\lambda}=\begin{bmatrix}\Gamma_{\lambda} \tilde{\varphi}(t, x_1,x_0)\\
  \vdots\\
  \Gamma_{\lambda}\tilde{\varphi}(t,x_N,x_0) \end{bmatrix}$
,$\,
 \Psi_{\lambda,\varepsilon}=\begin{bmatrix}
                             \Gamma_{\lambda}(\varepsilon_1-\varepsilon_0)\\
                             \vdots\\
                             \Gamma_{\lambda}(\varepsilon_N-\varepsilon_0)
                            \end{bmatrix},
                            $
and
$ \Theta=\begin{bmatrix}\bar c\lambda BK^c\Gamma_{\lambda}\Delta_{\theta}^{-1}\left(d_1\bar e_{1,0}-\sum_{k=1}^N\mathcal H_{1k}\bar e_{1,k}\right)\\\vdots\\ \bar c\lambda BK^c\Gamma_{\lambda}\Delta_{\theta}^{-1}\left(d_N\bar e_{N,0}-\sum_{k=1}^N\mathcal H_{Nk}\bar e_{N,k}\right)\end{bmatrix}.$

\noindent\emph{Step 2.} One now introduces the candidate Lyapunov functions, one that depends on the control error $\eta^c$ and one that depends on the observer errors $\bar e_{i,j}$.\\
One thus define first $\bar V_c(\eta^c)=(\eta^c)^T[\Omega\otimes Q]\eta^c$, where  $\Omega$ is defined in remark \ref{remark_existence_omega}.\\
Then, for the observer error part, one considers
$\bar V_o(\eta^o)=\sum_{i=1}^N\sum_{j=0}^N\nu_{ij}V_o(\bar e_{i,j})$, with $V_o(\bar e_{i,j})=\bar e_{i,j}^T P\bar e_{i,j}$, where the vector $\eta^o$ contains all the $\bar e_{i,j}$ such that $\nu_{ij}=1$.\\
 One has
\begin{align}
 \dot{\bar V}_c(\eta^c)&=\lambda(\eta^c)^T(\Omega\otimes[A^TQ+QA])\eta^c
 -\bar c\lambda (\eta^c)^T[(\mathcal H^T\Omega)\otimes ((BK^c)^TQ)]\eta^c
  -\bar c\lambda(\eta^c)^T[(\Omega\mathcal H)\otimes (QBK^c)](\eta^c)\nonumber\\
&\quad+2(\eta^c)^T[\Omega\otimes Q]\Phi_{\lambda}+2(\eta^c)^T[\Omega\otimes Q]\Theta
+2(\eta^c)^T[\Omega\otimes Q]\Psi_{\lambda,\varepsilon},\\
 \dot V_o(\bar e_{i,j})&=\theta\bar e_{i,j}^T[(A-K^oC)^TP+P(A-K^oC)]\bar e_{i,j},
 +2\bar e_{i,j}^TP\Delta_{\theta}\tilde{\varphi}(t,\hat x_{i,j},x_j)\nonumber\\
 &\quad -\frac{2}{\theta^{q-1}}\bar e_{i,j}^TPBu_j+2\theta \bar e_{i,j}^TPK^o(C\bar e_{i,j}-z_{i,j}) -2\bar e_{i,j}^TP\Delta_{\theta}\varepsilon_j
\end{align}
for all $i,j$ such that $\nu_{ij}=1$.\\
\emph{Step 2.1 Over-valuation of $\dot {\bar V}_c(\eta^c)$}\\
 Using the equality $(BK^c)^TQ=QBK^c=QBB^TQ$ (obtained with the definition of $Q$ in \eqref{eqn_def_Q}) and Lemma \ref{lemma_technical_inequalities} (iv), one gets
\begin{align}
 (\mathcal H^T\Omega)\otimes((BK^c)^TQ)+(\Omega \mathcal H)\otimes(QBK^c)&=[\mathcal H^T\Omega+\Omega\mathcal H]\otimes [QBB^TQ],\\
 &\ge\frac{\varrho}{\omega_{\max}}\Omega\otimes[QBB^TQ],
\end{align}
with $\varrho$ and $\omega_{\max}$ defined by \eqref{eqn_def_varrho} and \eqref{eqn_def_omega_max} respectively. Taking $\bar c\ge c^*\stackrel{\triangle}{=}\max\{\omega_{\max}/\varrho,1\}$ and using Lemma \ref{lemma_technical_inequalities} (i) and (ii) leads to 
\begin{align}\label{eqn_proof_step_2.1_eq4}
 \dot{\bar V}_c&\le-\lambda\bar V_c
 +2\sqrt{\omega_{\max}\rho_{\max}(Q)}\sqrt{\bar V_c}\left(\Vert \Phi_{\lambda}\Vert+\Vert\Psi_{\lambda,\varepsilon}\Vert+\Vert \Theta\Vert\right).
\end{align}
One now derives over-valuations of the terms $\Vert \Theta\Vert$, $\Vert \Phi_{\lambda}\Vert$ and $\Vert\Psi_{\lambda,\varepsilon}\Vert$.\\
 Using the definition of $d_i,\mathcal H$ and $\nu_{ij}$, and the fact that $\Vert B\Vert=1$ yields
\begin{equation}
 \Vert\Theta\Vert \le \bar c\lambda\Vert K^c\Vert\Vert\Gamma_{\lambda}\Delta_{\theta}^{-1}\Vert h_{\max} \sum_{i=1,j=0}^N\nu_{ij}\Vert \bar e_{i,j}\Vert,\label{eqn_proof_step_2.1_eq1}
 \end{equation}
where $h_{\max}=\max_{i,j}\vert\mathcal H_{ij}\vert$. Further using Lemma \ref{lemma_technical_inequalities} (ii) and (iii), one obtains
 \begin{equation}
 \Vert\Theta\Vert \le \frac{\bar c\lambda\Vert K^c\Vert\,\Vert\Gamma_{\lambda}\Delta_{\theta}^{-1}\Vert (N+1)h_{\max}}{\sqrt{\rho_{\min}(P)}} \sqrt{\bar V_o}. \label{eqn_proof_step_2.1_eq2}
 \end{equation}
 High-gain techniques, such as in \cite{Farza2004}, with $\lambda\ge1$ gives
 \begin{equation}
 \Vert \Phi_{\lambda}\Vert \le  \frac{\sqrt{n}L_{\varphi}}{\sqrt{\omega_{\min}\rho_{\min}(Q)}}\sqrt{\bar V_c}.\label{eqn_proof_step_2.1_eq3}
\end{equation}
For the over-valuation of $\Vert\Psi_{\lambda,\varepsilon}\Vert$, one has
\begin{align}
 \Vert\Psi_{\lambda,\varepsilon}\Vert&\le \sum_{i=1}^N\Vert \Gamma_{\lambda}(\varepsilon_i-\varepsilon_0)\Vert
 \le \sum_{i=1}^N\left(\Vert\Gamma_{\lambda}\varepsilon_i\Vert+\Vert\Gamma_{\lambda}\varepsilon_0\Vert\right),\nonumber\\
 &\le \sum_{i=1}^N\sum_{k=1}^q\lambda^{q-k+1}\left(\Vert\varepsilon_i^{(k)}\Vert+\Vert\varepsilon_0^{(k)}\Vert\right),\\
 &\le \sum_{i=1}^N\sum_{k=1}^q2\delta_{\varepsilon}^k\lambda^{q-k+1}\\
 &\le 2N\sum_{k=1}^q\delta_{\varepsilon}^k\lambda^{q-k+1}.\label{eqn_proof_step_2.1_eq5}
\end{align}
Finally, using inequalities (\ref{eqn_proof_step_2.1_eq4}), (\ref{eqn_proof_step_2.1_eq2}), (\ref{eqn_proof_step_2.1_eq3}) and \eqref{eqn_proof_step_2.1_eq5}, one gets
\begin{align}
 \dot{\bar V}_c&\le -\lambda\bar V_c+2k_1\bar V_c+2\bar ck_2\lambda\Vert \Gamma_{\lambda}\Delta_{\theta}^{-1}\Vert\sqrt{\bar V_c}\sqrt{\bar V_o}
 +2k_3\sqrt{\bar V_c}\left(\sum_{k=1}^q\lambda^{q-k+1}\delta_{\varepsilon}^k\right),\label{ineq_bar_V_c}
\end{align}
with 
\begin{align}
 k_1=&L_{\varphi}\sqrt{n}\sqrt{\frac{\omega_{\max}\rho_{\max}(Q)}{\omega_{\min}\rho_{\min}(Q)}},\label{eqn_def_k1}\\
 k_2=&(N+1)\Vert K^c\Vert h_{\max}\sqrt{\frac{\omega_{\max}\rho_{\max}(Q)}{\rho_{\min}(P)}}, \label{eqn_def_k2}\\
 k_3=&2N\sqrt{\omega_{\max}\rho_{\max}(Q)}.\label{eqn_def_k3}
\end{align}
\emph{Step 2.2 Over-valuation of $\dot V_o(\bar e_{i,j})$}\\
Using the definition of $P$ in equation (\ref{eqn_def_P}), Lemma \ref{lemma_technical_inequalities} (i) and (ii) and the fact that $\Vert B\Vert=1$ give
\begin{align}\label{eqn_proof_step_2.2_eq0}
 \dot V_o\le&-\theta V_o+2\sqrt{\rho_{\max}(P)}\sqrt{V_o}\Vert \Delta_{\theta}\tilde{\varphi}(t,\hat x_{i,j},x_j)\Vert 
 +2\sqrt{\rho_{\max}(P)}\sqrt{V_o}\left(\frac{\Vert u_j\Vert}{\theta^{q-1}}+\theta\Vert K^o\Vert\Vert C\bar e_{i,j}-z_{i,j}\Vert\right)\nonumber\\
 & +2\sqrt{\rho_{\max}(P)}\sqrt{V_o}\left(\Vert\Delta_{\theta}\varepsilon_j\Vert\right).  
\end{align}
One now derives over-valuations of the terms  $\Vert u_j\Vert$, $\Vert \Delta_{\theta}\tilde{\varphi}(t,\hat x_{i,j},x_j)\Vert$, $\Vert C\bar e_{i,j}-z_{i,j}\Vert$ and $\Vert \Delta_{\theta}\varepsilon_j\Vert$ .\\
 Using the definition of $\nu_{ij}$ and $\mathcal H$, one gets 
 \begin{align}
 \Vert u_j\Vert&\le\bar c\Vert K^c\Vert h_{\max} \Bigg(\sum_{k=1}^N \Vert e_k\Vert+\Vert\Gamma_{\lambda}\Delta_{\theta}^{-1}\Vert \sum_{k=0}^N\nu_{jk}\left\Vert \bar e_{j,k}\right\Vert\Bigg).\label{eqn_proof_step_2.2_eq2}
\end{align}
Further applying Lemma \ref{lemma_technical_inequalities} (ii) and (iii) gives
\begin{align}
 \Vert u_j\Vert&\le\bar c\Vert K^c\Vert\sqrt{N+1} h_{\max}\Bigg(\frac{\sqrt{\bar V_c}}{\sqrt{\rho_{\min}(Q)\omega_{\min}}}+\frac{\Vert \Gamma_{\lambda}\Delta_{\theta}^{-1}\Vert\sqrt{\bar V_o}}{\sqrt{\rho_{\min}(P)}}\Bigg).\label{eqn_proof_step_2.2_eq3}
 \end{align}
 Using high-gain techniques such as in \cite{Farza2004}, with $\theta\ge1$, yields
\begin{equation}
 \Vert \Delta_{\theta} \tilde{\varphi}(t,\hat x_{i,j},x_j)\Vert\le\frac{\sqrt{n}L_{\varphi}}{\sqrt{\rho_{\min}(P)}}\sqrt{ V_o(\bar e_{i,j})}.\label{eqn_proof_step_2.2_eq1}
 \end{equation}
Concerning the over-valuation of $\Vert C\bar e_{i,j}-z_{i,j}\Vert$, using equations \eqref{eqn_def_z_ij} and \eqref{eqn_proof_bar_e_ij_01} yields
  \begin{align}
   &\frac{d}{dt}\left(C\bar e_{i,j}(t)- z_{i,j}(t)\right)=\bigg(\theta\bar e_{i,j}^{(2)}-\theta K_1^o z_{i,j}-\varepsilon_j^{(1)}+\varphi_1\left(t,\hat x_{i,j}^{(1)}\right)-\varphi_1\left(t,x_{j}^{(1)}\right)\bigg)-\left(-\theta K_1^o z_{i,j}\right)\text{, if }q\ge2,\label{eqn_proof_0011}\\
   &\frac{d}{dt}\left(C\bar e_{i,j}(t)- z_{i,j}(t)\right)=\bigg(-\theta K_1^o z_{i,j}-u_j-\varepsilon_j^{(1)} +\varphi_1\left(t,\hat x_{i,j}^{(1)}\right)-\varphi_1\left(t,x_{j}^{(1)}\right)\bigg)-\left(-\theta K_1^o z_{i,j}\right)\text{, if }q=1.\label{eqn_proof_0012}
  \end{align}
  for all $t\in\mathbb R, t\ne t_k^{i,j}$, $k\in\mathbb N$.\\
  \noindent Then, denoting $\kappa_{i,j}(t)=\max\left\{t_k^{i,j}\bigg\vert t_k^{i,j}<t,k\in\mathbb N\right\}$ the last instant when agent $j$ transmitted its measurement to agent $i$ at time $t$ and using the fact that $C\bar e_{i,j}(\kappa_{i,j}(t))-z_{i,j}(\kappa_{i,j}(t))=-w_j(\kappa_{i,j}(t))$ for all $t\ge0$, integrating \eqref{eqn_proof_0011} from $\kappa_{i,j}(t)$ to $t$ gives, for $q\ge2$:
 \begin{align}
 &C\bar e_{i,j}(t)-z_{i,j}(t)=-w_j(\kappa_{i,j}(t))+\int_{\kappa_{i,j}(t)}^t\theta\bar e_{i,j}^{(2)}(s)+\varphi_1\left(t,\hat x_{i,j}^{(1)}(s)\right)-\varphi_1\left(t,x_j^{(1)}(s)\right)-\varepsilon_j^{(1)}(s)ds,\label{eqn_proof_step_2.2_eq4}
 \end{align}
 and integrating \eqref{eqn_proof_0012} from $\kappa_{i,j}(t)$ to $t$ gives, for $q=1$:
 \begin{align}
  &C\bar e_{i,j}(t)-z_{i,j}(t)=-w_j(\kappa_{i,j}(t))+\int_{\kappa_{i,j}(t)}^t u_j(s)+\varphi_1\left(t,\hat x_{i,j}^{(1)}(s)\right)-\varphi_1\left(t,x_j^{(1)}(s)\right)-\varepsilon_j^{(1)}(s)ds.\label{eqn_proof_step_2.2_eq9}
 \end{align}
 Further using Assumptions \ref{assumption_nonlinear_lipschitz} and \ref{assumption_boundedness_uncertainties_noise}, one obtains, if $q\ge2$:
 \begin{align}
\Vert C\bar e_{i,j}-z_{i,j}\Vert \le& \delta_{w}+(\theta+L_{\varphi})\int_{\kappa_{i,j}(t)}^t\Vert \bar e_{i,j}(s)\Vert ds +(t-\kappa_{i,j}(t))\delta_{\varepsilon}^1,\label{eqn_proof_step_2.2_eq5}
\end{align}
and, if $q=1$, using \eqref{eqn_proof_step_2.2_eq3} and the fact that $\Vert \Gamma_{\lambda}\Delta_{\theta}^{-1}\Vert=\lambda$ give
\begin{align}
 \Vert C\bar e_{i,j}-z_{i,j}\Vert &\le \delta_{w}+\bar c  \Vert K^c\Vert \sqrt{N+1}h_{\max}\int_{\kappa_{i,j}(t)}^t \frac{\bar V_c(s)}{\sqrt{\omega_{\min}\rho_{\min}(Q)}}  ds+\bar c  \Vert K^c\Vert \sqrt{N+1}h_{\max}\int_{\kappa_{i,j}(t)}^t \lambda\frac{\sqrt{\bar V_o(s)}}{\sqrt{\rho_{\min}(P)}}ds\nonumber\\
 &\quad+L_{\varphi}\int_{\kappa_{i,j}(t)}^t\Vert \bar e_{i,j}(s)\Vert ds+(t-\kappa_{i,j}(t))\delta_{\varepsilon}^{1}.
\end{align}
Applying Lemma \ref{lemma_technical_inequalities} (ii) and the fact that $t-\kappa_{i,j}(t)\le \tau_M$, by definition of $\kappa_{i,j}$ and since $\left\vert t_{k+1}^{i,j}-t_k^{i,j}\right\vert<\tau_M$ for all $k\in\mathbb N$, one gets, if $q\ge 2$:
\begin{align}
 \Vert C\bar e_{i,j}-z_{i,j}\Vert\le& \delta_{w}+\tau_M\delta_{\varepsilon}^{1}+\frac{(\theta+L_{\varphi})}{\sqrt{\rho_{\min}(P)}}\int_{t-\tau_M}^t\sqrt{\bar V_o(\eta^o(s))}ds,\label{eqn_proof_step_2.2_eq6}
\end{align}
and if $q=1$:
\begin{align}
\Vert C\bar e_{i,j}-z_{i,j}\Vert&\le\delta_{w}+\tau_M\delta_{\varepsilon}^{1}+\left(\frac{\bar c  \Vert K^c\Vert \sqrt{N+1}h_{\max}\lambda+L_{\varphi}}{\sqrt{\rho_{\min}(P)}}\right) \int_{t-\tau_M}^t \sqrt{\bar V^o(s)}ds\nonumber\\
&\quad +\left(\frac{\bar c  \Vert K^c\Vert \sqrt{N+1}h_{\max}}{\sqrt{\omega_{\min}\rho_{\min}(Q)}}\right) \int_{t-\tau_M}^t \sqrt{\bar V^c(s)}ds.\label{eqn_proof_step_2.2_eq7}
\end{align}
Furthermore, similarly to the over-valuation of $\Vert \Psi_{\lambda,\varepsilon}\Vert$, one gets
\begin{align}
 \Vert \Delta_{\theta}\varepsilon_j\Vert\le& \sum_{k=1}^q\frac{1}{\theta^{k-1}}\delta_{\varepsilon}^k.\label{eqn_proof_step_2.2_eq8}
\end{align}
Finally, using inequalities (\ref{eqn_proof_step_2.2_eq0}), (\ref{eqn_proof_step_2.2_eq3}),  (\ref{eqn_proof_step_2.2_eq1}), (\ref{eqn_proof_step_2.2_eq6}), (\ref{eqn_proof_step_2.2_eq7}) and (\ref{eqn_proof_step_2.2_eq8})  yields
 \begin{align}\label{eqn_proof_inq_V_o}
 \dot V_o(\bar e_{i,j})&\le-\theta V_o(\bar e_{i,j})+2k_4V_o(\bar e_{i,j})+\frac{2 k_5}{(N+1)}\sqrt{V_o(\bar e_{i,j})}\int_{t-\tau_M}^t\sqrt{\bar V_o(\eta^o(s))}ds\nonumber\\
  &\quad+\frac{2k_6}{(N+1)}\sqrt{V_o(\bar e_{i,j})}\int_{t-\tau_M}^t\sqrt{\bar V_c(\eta^c(s))}ds+\frac{2\bar c\sqrt{V_o(\bar e_{i,j})}}{\theta^{q-1}(N+1)}\left(k_7\sqrt{\bar V_c}+k_8\Vert \Gamma_{\lambda}\Delta_{\theta}^{-1}\Vert \sqrt{\bar V_o}\right)\nonumber\\
 &\quad+\frac{2k_9}{(N+1)}\sqrt{V_o(\bar e_{i,j})}\left(\sum_{k=1}^q\frac{\delta_{\varepsilon}^k}{\theta^{k-1}}\right)+\frac{2\theta k_{10}}{(N+1)}\sqrt{V_o(\bar e_{i,j})}\left( \delta_w+\tau_M\delta_{\varepsilon}^{1}\right),
\end{align}
with 
\begin{align}
 k_4=&L_{\varphi}\sqrt{n}\sqrt{\frac{\rho_{\max}(P)}{\rho_{\min}(P)}},\label{eqn_def_k4}\\
k_5=&\theta(N+1)\Vert K^o\Vert(L_{\varphi}+\theta)\sqrt{\frac{\rho_{\max}(P)}{\rho_{\min}(P)}}\text{, if }q\ge2,\nonumber\\
 =&\theta(N+1)\Vert K^o\Vert\left(\bar c  \sqrt{N+1}h_{\max}\lambda+L_{\varphi}\right)\sqrt{\frac{\rho_{\max}(P)}{\rho_{\min}(P)}} \text{, if }q=1,\label{eqn_def_k5}\\
k_6=&0 \text{, if }q\ge2,\nonumber\\
=& \frac{\bar c \theta  (N+1)^{\frac{3}{2}}h_{\max}}{\sqrt{\omega_{\min}}} \text{, if }q=1,\label{eqn_def_k6}\\
k_7=& \Vert K^c\Vert(N+1)^{\frac{3}{2}}h_{\max}\sqrt{\frac{\rho_{\max}(P)}{\rho_{\min}(Q)\omega_{\min}}},\label{eqn_def_k7}\\
k_8=&\Vert K^c\Vert(N+1)^{\frac{3}{2}}h_{\max}\sqrt{\frac{\rho_{\max}(P)}{\rho_{\min}(P)}},\label{eqn_def_k8}\\
k_9=&\sqrt{\rho_{\max}(P)}(N+1),\label{eqn_def_k9}\\
k_{10}=&\sqrt{\rho_{\max}(P)}\Vert K^o\Vert(N+1).\label{eqn_def_k10}
\end{align}
\emph{Step 2.3 Over-valuation of $\dot{\bar V}_o(\eta^o)$}\\
Using the definition of $\bar V_o$, inequality (\ref{eqn_proof_inq_V_o}) and Lemma \ref{lemma_technical_inequalities} (iii)  give 
\begin{align}\label{ineq_bar_V_o}
 \dot{\bar V}_o(\eta^o)\le& -\theta \bar V_o(\eta^o)+2k_4\bar V_o(\eta^o)+2 k_5\sqrt{\bar V_o(\eta^o)}\int_{t-\tau_M}^t\sqrt{\bar V_o(\eta^o(s))}ds+2k_6\sqrt{\bar V_o(\eta^o)}\int_{t-\tau_M}^t\sqrt{\bar V_c(\eta^c(s))}ds\\
 &+\frac{2\bar ck_7}{\theta^{q-1}}\sqrt{\bar V_o(\eta^o)}\sqrt{\bar V_c(\eta^c)}+\frac{2\bar ck_8\Vert \Gamma_{\lambda}\Delta_{\theta}^{-1}\Vert}{\theta^{q-1}} \bar V_o(\eta^o)+2k_9 \sqrt{\bar V_o(\eta^o)}\left(\sum_{k=1}^q\frac{\delta_{\varepsilon}^k}{\theta^{k-1}}\right)+2\theta k_{10}\sqrt{\bar V_o(\eta^o)}\left(\delta_w+\tau_M\delta_{\varepsilon}^{1}\right).\nonumber
 \end{align}
\emph{Step 3.} 
From inequalities (\ref{ineq_bar_V_c}) and (\ref{ineq_bar_V_o}), one directly obtains
\begin{align*}
 \frac{d}{dt}\left(\sqrt{\bar V_c}\right)&\le-\frac{\lambda}{2}\sqrt{\bar V_c}+k_1\sqrt{\bar V_c}+\bar c k_2\lambda\Vert \Gamma_{\lambda}\Delta_{\theta}^{-1}\Vert\sqrt{\bar V_o}+k_3\left(\sum_{k=1}^q\delta_{\varepsilon}^k\lambda^{q-k+1}\right),\\
 \frac{d}{dt}\left(\sqrt{\bar V_o}\right)&\le-\frac{\theta}{2}\sqrt{\bar V_o}+k_4\sqrt{\bar V_o}+\frac{\bar ck_7}{\theta^{q-1}}\sqrt{\bar V_c}+\frac{\bar c k_8\Vert \Gamma_{\lambda}\Delta_{\theta}^{-1}\Vert}{\theta^{q-1}}\sqrt{\bar V_o}+k_5\int_{t-\tau_M}^t\sqrt{\bar V_o}ds\\
 &\quad +k_6\int_{t-\tau_M}^t\sqrt{\bar V_c}ds +k_9 \left(\sum_{k=1}^q\frac{\delta_{\varepsilon}^k}{\theta^{k-1}}\right)+\theta k_{10}\left(\delta_w+\tau_M\delta_{\varepsilon}^{1}\right).\nonumber
\end{align*}
Taking $\theta=\xi\lambda$ with $\xi\ge1$ leads to $\Vert\Gamma_{\lambda}\Delta_{\theta}^{-1}\Vert=\frac{\theta^{q}}{\xi}$ and
\begin{align}
  \frac{d}{dt}\left(\xi^{\frac{3}{2}}\sqrt{{\bar V}_c}+\theta^q\sqrt{{\bar V}_o}\right)&\le-\frac{\xi^{\frac{3}{2}}\lambda}{4}\sqrt{{\bar V}_c}-\frac{\theta^{q+1}}{4}\sqrt{{\bar V}_o}-\frac{\xi^{\frac{3}{2}}\lambda}{4}\left(1-\frac{4k_1}{\lambda}-\frac{4 \bar ck_7}{\xi^{\frac{1}{2}}}\right)\sqrt{{\bar V}_c} \nonumber\\
&\quad -\frac{\theta^{q+1}}{4}\left(1-\frac{4\bar ck_2}{\xi^{\frac{1}{2}}}-\frac{4k_4}{\theta}-\frac{4 \bar ck_8}{\xi}\right)\sqrt{{\bar V}_o}\nonumber\\
&\quad+ \theta^q k_5\int_{t-\tau_M}^t\sqrt{\bar V_o}ds+ \theta^q k_6 \int_{t-\tau_M}^t\sqrt{\bar V_c}ds\nonumber\\
&\quad+k_3\left(\sum_{k=1}^q\xi^{\frac{3}{2}}\lambda^{q-k+1}\delta_{\varepsilon}^k\right)+k_9 \left(\sum_{k=1}^q\theta^{q-k+1}\delta_{\varepsilon}^k\right)+\theta^{q+1} k_{10}\left( \delta_w+\tau_M\delta_{\varepsilon}^{1}\right).
\end{align}
Then, since $\bar c\ge c^*\ge1$, if the inequalities 
 $\lambda\ge \lambda^*$ and 
$ \xi\ge \bar c^2\xi^*$ hold true,
with $\lambda^*\stackrel{\triangle}{=}24L_{\varphi}\sqrt{n}\max\left(\frac{\sqrt{\rho_M^{Q\omega}}}{\sqrt{\rho_m^{Q\omega}}}, \frac{\sqrt{\rho_{M}^P}}{\sqrt{\rho_{m}^P}}\right)$ and $\xi^*\stackrel{\triangle}{=}36\Vert K^c\Vert^2(N+1)^3h_{\max}^2\max\left(\frac{\sqrt{\rho_{M}^P}}{\sqrt{\rho_{m}^P}},\frac{\rho_{M}^P}{\rho_m^{Q\omega}}, \frac{\rho_M^{Q\omega}}{\rho_{m}^P} \right)$,  we obtain
\begin{align}
  \frac{d}{dt}\left(\xi^{\frac{3}{2}}\sqrt{{\bar V}_c}+\theta^q\sqrt{{\bar V}_o}\right)
&\le -\frac{\xi^{\frac{3}{2}}\lambda}{4}\sqrt{{\bar V}_c}-\frac{\theta^{q+1}}{4}\sqrt{{\bar V}_o}+ k_5\theta^{q}\int_{t-\tau_M}^t\sqrt{\bar V_o}ds+ k_6\theta^{q}\int_{t-\tau_M}^t\sqrt{\bar V_c}ds\nonumber\\
 &\quad +k_3\left(\sum_{k=1}^q\xi^{\frac{3}{2}}\lambda^{q-k+1}\delta_{\varepsilon}^k\right)+k_9 \left(\sum_{k=1}^q\theta^{q-k+1}\delta_{\varepsilon}^k\right) +\theta^{q+1} k_{10}\left(\delta_w+\tau_M\delta_{\varepsilon}^{1}\right).
\end{align}
since $\xi^*\ge \max\left\{(6k_2)^2,(5k_7)^2,24k_8\right\}$ and $\lambda^*\ge\max\left\{20k_1,24k_4\right\}$ .\\
Applying Lemma \ref{lemma_continuous_discrete_time} with $v_1^2=\sqrt{\bar V_c(\eta^c)}$, $v_2^2=\sqrt{\bar V_o(\eta^o)}$, $\gamma_1=\xi^{\frac{3}{2}}$, $\gamma_2=\theta^q$, $a_1=\frac{\xi^{\frac{3}{2}}\lambda}{4}$, $a_2=\frac{\theta^{q+1}}{4}$, $b_1=\theta^qk_6$, $b_2= \theta^{q}k_5$ and $k=\theta^{q+1} k_{10}\left(\delta_w+\tau_M\delta_{\varepsilon}^{1}\right)+k_3\left(\sum_{k=1}^q\xi^{\frac{3}{2}}\lambda^{q-k+1}\delta_{\varepsilon}^k\right)+k_9 \left(\sum_{k=1}^q\theta^{q-k+1}\delta_{\varepsilon}^k\right)$, ensures that the following inequality holds true
\begin{align}
 \xi^{\frac{3}{2}}\sqrt{\bar V_c}+\theta^{q}\sqrt{\bar V_o}
 &\le \varsigma e^{-\frac{\lambda}{8}t}+\frac{\theta^{q+1}}{\lambda} 8 k_{10}\left(\delta_w+\tau_M\delta_{\varepsilon}^{1}\right)+\frac{8}{\lambda}k_3\left(\sum_{k=1}^q\xi^{\frac{3}{2}}\lambda^{q-k+1}\delta_{\varepsilon}^k\right)
 +\frac{8}{\lambda}k_9 \left(\sum_{k=1}^q\theta^{q-k+1}\delta_{\varepsilon}^k\right),\\
 & \le \varsigma e^{-\frac{\lambda}{8}t}+\xi\theta^{q} 8 k_{10}\left(\delta_w+\tau_M\delta_{\varepsilon}^{1}\right)+8k_3\left(\sum_{k=1}^q\xi^{\frac{3}{2}}\lambda^{q-k}\delta_{\varepsilon}^k\right)+8\xi  k_9 \left(\sum_{k=1}^q\theta^{q-k}\delta_{\varepsilon}^k\right),
\end{align}
with
\begin{equation}
 \varsigma=\xi^{\frac{3}{2}}\sqrt{\bar V_c(\eta^c(0))}+\theta^q\sqrt{\bar V_o(\eta^o(0))},
\end{equation}
provided that 
\begin{align}
 \tau_M<\min\left(\frac{(\sqrt{2}-1)\xi^{\frac{3}{2}}\lambda}{8\theta^qk_6},\frac{(\sqrt{2}-1)\theta}{8k_5},\frac{2\sqrt{2}}{\lambda},\frac{2\sqrt{2}}{\theta} \right), \nonumber
\end{align}
which is verified if
\begin{equation}
 \tau_M<\frac{(\sqrt{2}-1)\min(\sqrt{\omega_{\min}},\sqrt{\rho_{\min}(P)})}{8\Vert K^o\Vert (N+1)^{\frac{3}{2}}h_{\max}\sqrt{\rho_{\max}(P)}\bar c(L_{\varphi}+\theta)}
\end{equation}
since $(L_{\varphi}+\theta)\ge\theta\ge\lambda$.\\
Then, using the inequalities
\begin{align}
 \Vert x_i-x_0\Vert\le& \frac{1}{\lambda}\Vert e_i\Vert \le\frac{1}{\lambda}\left\Vert \begin{bmatrix} e_1\\\vdots\\ e_N \end{bmatrix} \right\Vert
 \le\frac{1}{\lambda} \Vert \eta_c\Vert,
 \le \frac{1}{\lambda\sqrt{\omega_{\min}\rho_{\min}(Q)}}\sqrt{\bar V_c(\eta^c)},\\
 \sqrt{\bar V_c(\eta^c)}&\le\lambda^q\sqrt{\omega_{\max}\rho_{\max}(Q)}\sum_{i=1}^N\Vert x_i-x_0\Vert,\\
 \sqrt{\bar V_o(\eta^o)}&\le\sqrt{\rho_{\max}(P)}\sum_{i=1}^N\sum_{j=0}^N\Vert\tilde x_{i,j}\Vert,
\end{align}
leads to
\begin{align}
 \Vert x_i-x_0\Vert&\le \theta^{q-1}\chi_1 e^{-\frac{\lambda}{8}t}+\lambda^{-1}\theta^{q}\chi_2\left(\delta_w+\tau_M \delta_{\varepsilon}^1\right)+\lambda^{-1}\chi_3\left(\sum_{k=1}^q\theta^{q-k}\delta_{\varepsilon}^k\right),
\end{align}
since $\theta\ge\lambda\ge1$ with $\chi_1=\frac{\sqrt{\omega_{\max}\rho_{\max}(Q)}}{\sqrt{\omega_{\min}\rho_{\min}(Q)}}\sum_{i=1}^N\Vert x_i(0)-x_0(0)\Vert+\frac{\sqrt{\rho_{\max}(P)}}{\sqrt{\omega_{\min}\rho_{\min}(Q)}}\sum_{i=1}^N\sum_{j=0}^N\Vert \hat x_{i,j}(0)-x_j(0)\Vert$, $\chi_2=\frac{8k_{10}}{\sqrt{\omega_{\min}\rho_{\min}(Q)}}$, $\chi_3=\frac{8(k_3+k_9)}{\sqrt{\omega_{\min}\rho_{\min}(Q)}}$.
\end{proof}

\section{Example}\label{section_example}
In this section, we present a numerical example, consisting of systems whose dynamics are the same as a Chua's oscillator, in order to illustrate the performances of the proposed protocol. Indeed, as in \cite{Song2010}, let us consider the MAS where each agent's dynamics are given by
\begin{equation}\label{eqn_mas_sys_example}
 \begin{cases}
  \dot x_i^{(1)}(t)=x_i^{(2)}(t),\\
  \dot x_i^{(2)}(t)=f\left(t,x_i^{(1)}(t),x_i^{(2)}(t)\right)+u_i(t)+\varepsilon_i^{(2)}(t),\\
  y_i=x_i^{(1)}+w_i,\quad i=1,\dots,10,
 \end{cases}
\end{equation}
where $x_i^{(1)}\stackrel{\triangle}{=}\begin{pmatrix}x_i^{(1,1)}&x_i^{(1,2)}&x_i^{(1,3)}\end{pmatrix}^T\in\mathbb R^3,x_i^{(2)}\stackrel{\triangle}{=}\begin{pmatrix}x_i^{(2,1)}&x_i^{(2,2)}&x_i^{(2,3)}\end{pmatrix}^T\in\mathbb R^3$ are the position and velocity of agent $i$, respectively, $u_i,y_i,\varepsilon_i^{(2)},w_i\in\mathbb R^3$ are the input, output, uncertainty and noise of each agent, and the nonlinear function $f$ is given by
\begin{equation*}
 f\left(t,x_i^{(1)},x_i^{(2)}\right)=\begin{pmatrix}
   \alpha\left(x_i^{(2,2)}-x_i^{(2,1)}-h\left(x_i^{(2,1)}\right)\right)\\x_i^{(2,1)}-x_i^{(2,2)}+x_i^{(2,3)}\\-\beta x_i^{(2,2)}-\gamma x_i^{(2,3)}-\beta\epsilon\sin\left(\omega x_i^{(1,1)}\right)
  \end{pmatrix},
\end{equation*}
where $\alpha=10$, $\beta=19.53$, $\gamma=0.1636$, $\epsilon=0.2$, $\omega=0.5$ and $h$ is a piece-wise linear function given by $h\left(x_i^{(2,1)}\right)=\frac{a-b}{2}\left(\left\vert x_i^{(2,1)}+1\right\vert-\left\vert x_i^{(2,1)}-1\right\vert\right)$ with parameters $a=-1.4325$ and $b=-0.7831$.\\
The communication graph of the MAS system (\ref{eqn_mas_sys_example}) is described on Figure \ref{fig_graph_example} and one assumes that only agents $3$ and $5$ receive the measured position of the leader which is referenced as agent $0$. 
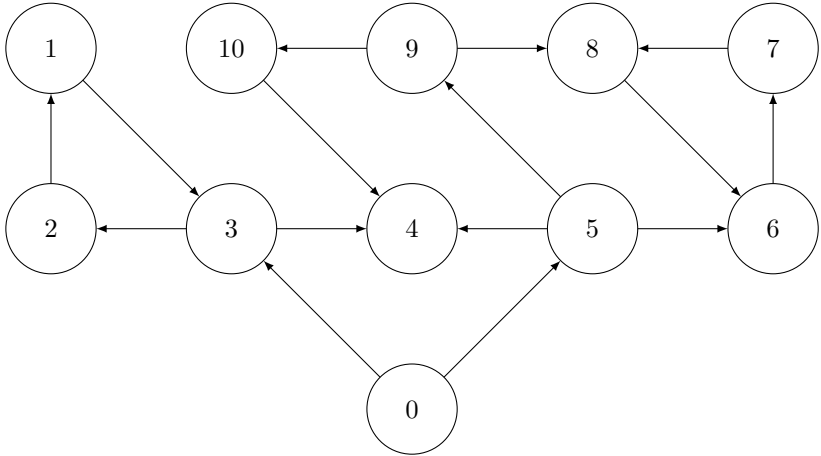
\begin{figure}[!ht]
 \begin{center}
\begin{tikzpicture}[scale=1.2]
 \draw (0,2) circle (0.5);
 \draw (0,2) node{$1$};
 \draw (0,0) circle (0.5);
 \draw (0,0) node{$2$};
   \draw (2,0) circle (0.5);
 \draw (2,0) node{$3$};
  \draw (4,0) circle (0.5);
 \draw (4,0) node{$4$};
   \draw (6,0) circle (0.5);
 \draw (6,0) node{$5$};
   \draw (8,0) circle (0.5);
 \draw (8,0) node{$6$};
  \draw (2,2) circle (0.5);
  \draw (2,2) node{$10$};
  \draw (4,2) circle (0.5);
 \draw (4,2) node{$9$};
   \draw (6,2) circle (0.5);
 \draw (6,2) node{$8$};
   \draw (8,2) circle (0.5);
 \draw (8,2) node{$7$};
 \draw (4,-2) circle (0.5);
 \draw (4,-2) node{$0$};
  \draw[>=latex,->](0,0.5)--(0,1.5);
  \draw[>=latex,->](1.5,0)--(0.5,0);
  \draw[>=latex,->] ({0.5*cos(-45)},{2+0.5*sin(-45)})--({2+0.5*cos(135)},{0.5*sin(135)});
  \draw[>=latex,->](2.5,0)--(3.5,0);
  \draw[>=latex,->](5.5,0)--(4.5,0);
  \draw[>=latex,->](6.5,0)--(7.5,0);
  \draw[>=latex,->](3.5,2)--(2.5,2);%
  \draw[>=latex,->](4.5,2)--(5.5,2);%
  \draw[>=latex,->](7.5,2)--(6.5,2);%
  \draw[>=latex,->] ({6+0.5*cos(-45)},{2+0.5*sin(-45)})--({8+0.5*cos(135)},{0.5*sin(135)});%
   \draw[>=latex,->] ({2+0.5*cos(-45)},{2+0.5*sin(-45)})--({4+0.5*cos(135)},{0.5*sin(135)});%
   \draw[>=latex,->] ({6+0.5*cos(135)},{0.5*sin(135)})--({4+0.5*cos(-45)},{2+0.5*sin(-45)});
  \draw[>=latex,->](8,0.5)--(8,1.5);%
  \draw[>=latex,->]({4+0.5*cos(135)},{-2+0.5*sin(135)})--({2+0.5*cos(-45)},{0.5*sin(-45)});
  \draw[>=latex,->]({4+0.5*cos(45)},{-2+0.5*sin(45)})--({6+0.5*cos(225)},{0.5*sin(225)});
\end{tikzpicture}
 \end{center}
 \caption{Communication graph of the MAS}
 \label{fig_graph_example}
\end{figure}
The leader-following consensus protocol (\ref{eqn_protocol_consensus_control})-(\ref{eqn_protocol_consensus_observer}) has been implemented in Matlab for minimum and maximum bound on the sampling periods equal to $\tau_m=0.02s$ and $\tau_M=0.04s$ respectively. The sampling periods have been set following a uniform distribution on $[\tau_m,\tau_M]$ independently for each edge. The first sampling periods corresponding to the transmission of the output of agent $2$ to agent $1$ are reported on figure \ref{fig_sampling_periods_agent_1_agent_2}.\\
\begin{figure}[!ht]
\begin{center}
    \includegraphics[width=10cm]{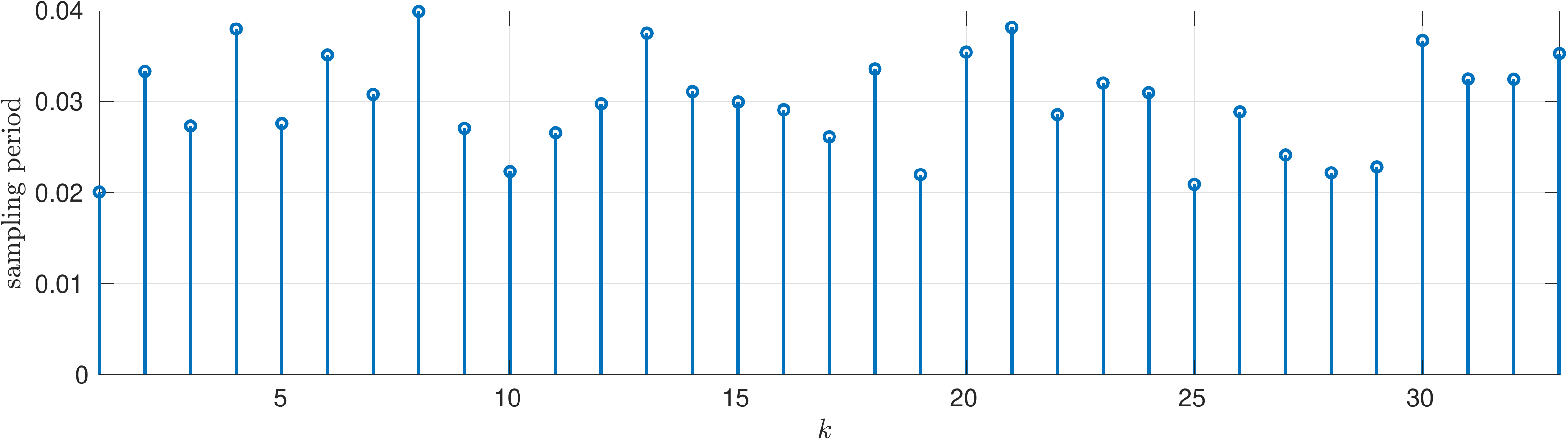}
\end{center}
\caption{Sampling periods for the transmission from agent $2$ to agent $1$.}
\label{fig_sampling_periods_agent_1_agent_2}
\end{figure}
The tuning parameters $\bar c,\theta,\lambda$ have been chosen by trial and error and taken equal as $\bar c=1$, $\theta=20$ and $\lambda=2$.\\
A first simulation has been conducted with no uncertainty on the dynamics and no noise on the outputs, that is $\varepsilon_i^{(2)}=0$ and $w_i=0$. The position of the different agents are reported on Figure \ref{fig_simulation_results_position}a)c)e). Furthermore, the estimation error of agent $2$ by agent $1$ is depicted in Figure \ref{figure_estimation_example}a)c). The position tracking mean error $\frac{1}{N}\sum_{i=1}^N\left\Vert x_i^{(1)}(t)-x_0^{(1)}(t)\right\Vert$ is reported on figure \ref{figure_mean_tracking_position_error}a). As expected by the theory, both the tracking errors and the observation errors go to zero exponentially.\\
It is worth to be noted that only the outputs, namely the position of the agents, are transmitted through the network and according to the topology described on Figure \ref{fig_graph_example}. The velocities and inputs are not transmitted and then unknown by neighbors agents. Furthermore, each agent transmits its output at time instants independently from its neighbors.\\
Another simulation has been conducted, in the same conditions but with a non zero uncertainty only on the leader's dynamics, corresponding to an unknown leader input (that is $\varepsilon_0^{(2)}\not\equiv0$ and $\varepsilon_i^{(2)}\equiv 0$ for $i=1,\dots,10$) and noise on the transmitted outputs. More precisely, the uncertainty on the leader is given by $\varepsilon_0^{(2)}(t)=\begin{pmatrix} \cos(t)&\cos(2t)&\cos(3t)\end{pmatrix}^T$ and the noise on the outputs of the agents and the leader $w_i$ are centered white noise with variance equal to $0.1$. The second component of the noisy and non noisy outputs of the leader are reported on figure \ref{figure_noisy_output}. The positions of the agents are reported on figure \ref{fig_simulation_results_position}b)d)f). The estimation error of agent $2$ by agent $1$ is reported on Figure \ref{figure_estimation_example}b)d). The position tracking mean error $\frac{1}{N}\sum_{i=1}^N\left\Vert x_i^{(1)}(t)-x_0^{(1)}(t)\right\Vert$ is reported on figure \ref{figure_mean_tracking_position_error}b). While the position tracking error does not converge exactly to zero, the effect of the uncertainties has been lowered by taking $\theta$ and $\lambda$ sufficiently high. It should be noted that in the case of noisy measurements, very high values of $\theta$ will lead to an amplification of the noise in the reconstructed state. Thus, a trade-off on the value of $\theta$ and $\lambda$ has to be done. Indeed, sufficiently high values of these parameters have to be considered to attenuate the effect of the uncertainties but not too high such that the noise is not amplified too much. Nevertheless, despite uncertainties and noise, the proposed leader following consensus protocol still performs well as illustrated in the simulation.
\begin{figure}
 \begin{center}
  \includegraphics[width=8cm]{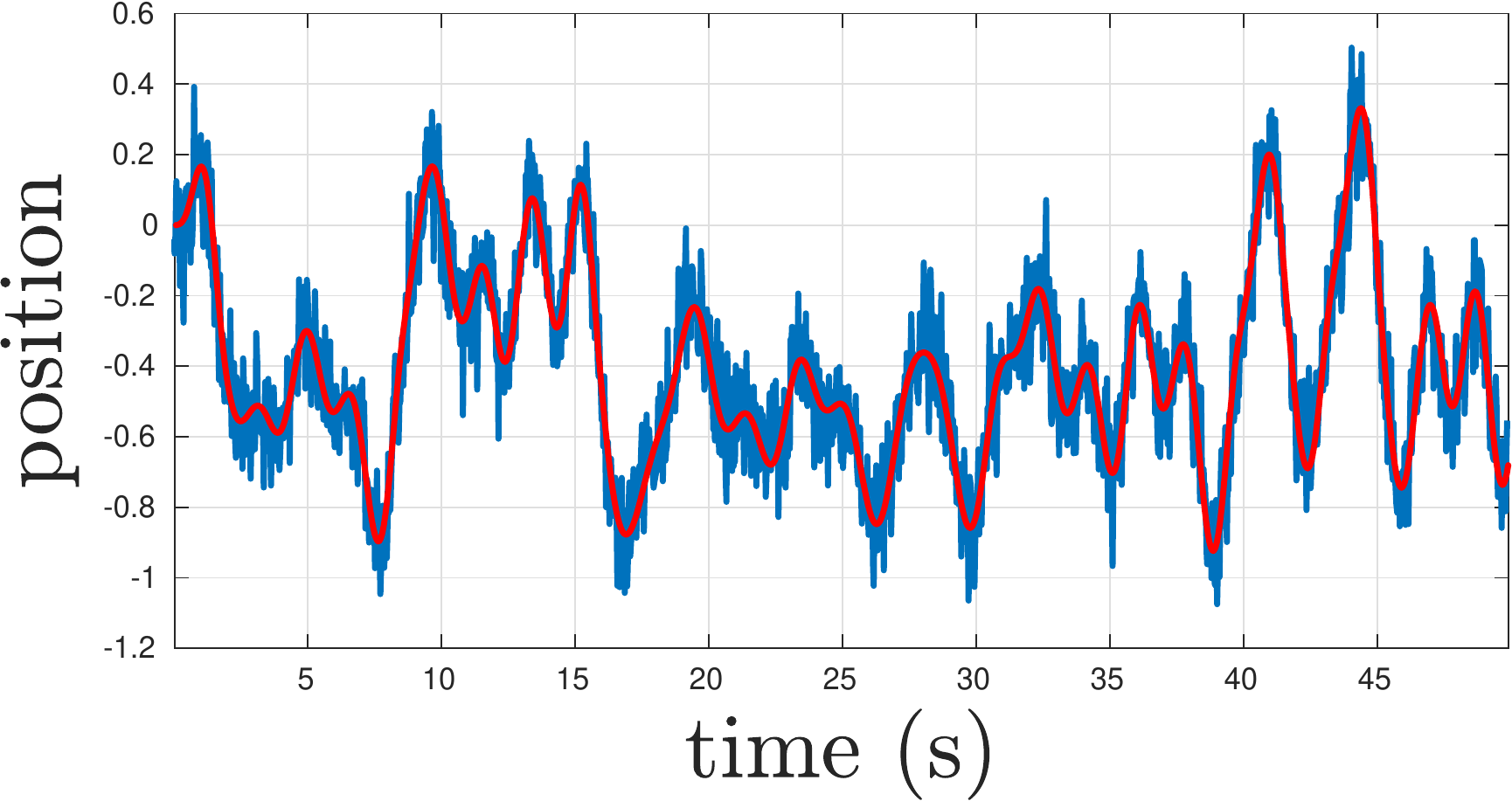}
 \end{center}
\caption{Second component of $y_0$ with noise (red) and without noise (blue)}
\label{figure_noisy_output}
\end{figure}
\begin{figure}[!ht] 
\begin{center}
\begin{tabular}{cc}
 \subfloat[Positions $x_i^{(1,1)}$, $i=0,\dots,10$]{\includegraphics[width=7cm]{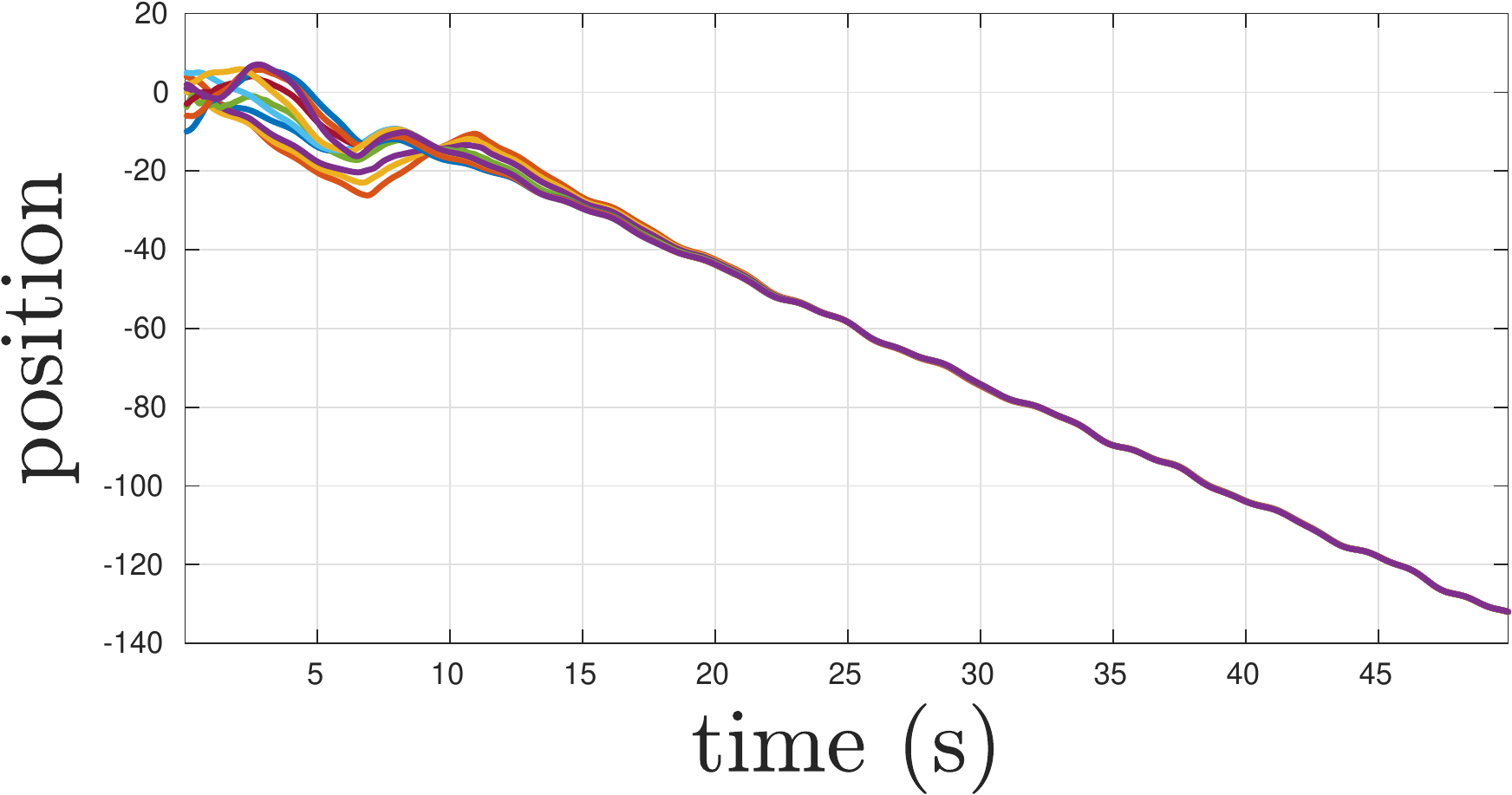}}
 &
  \subfloat[Positions $x_i^{(1,1)}$, $i=0,\dots,10$]{\includegraphics[width=7cm]{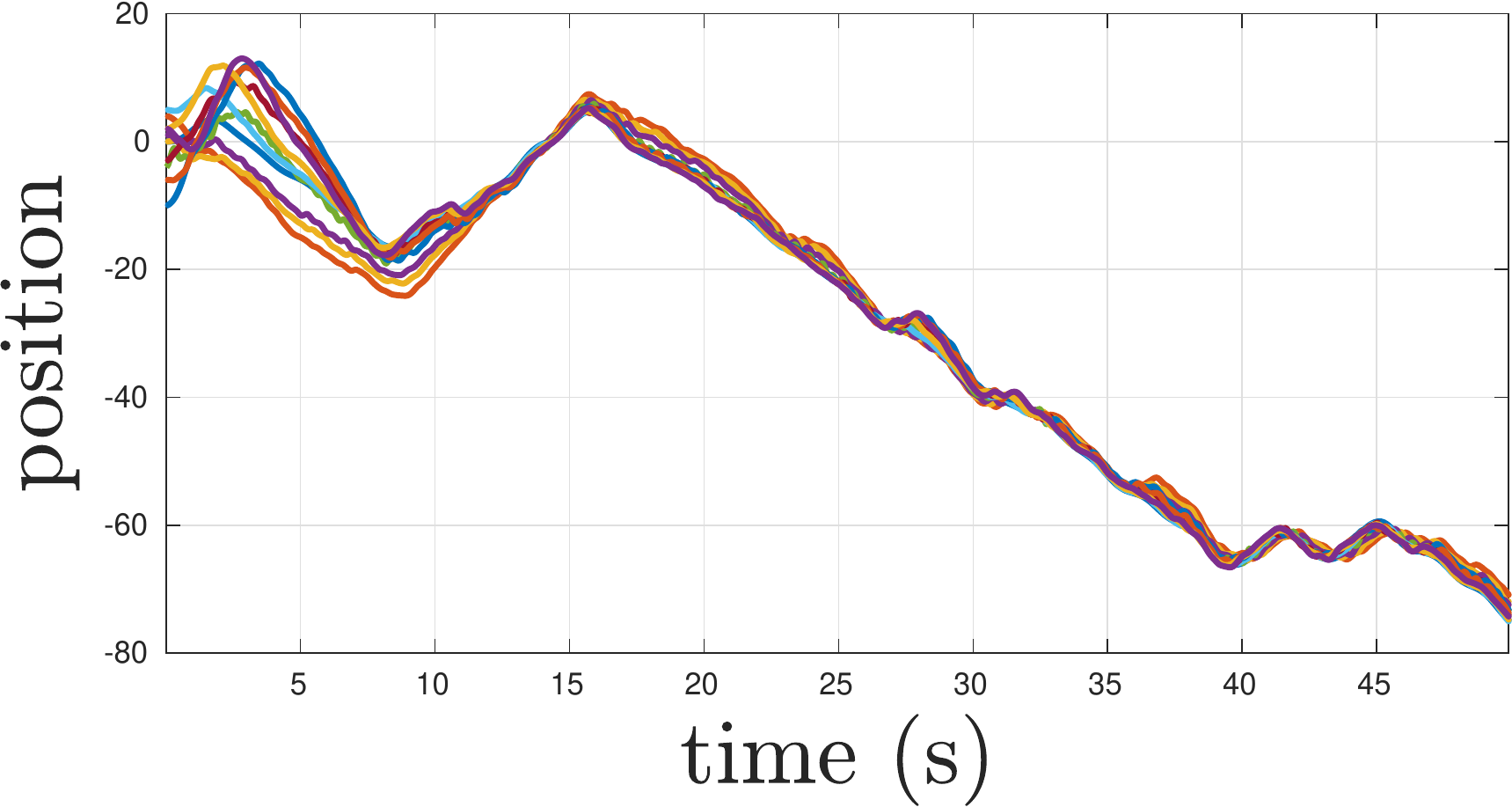}}
\\
 \subfloat[Positions $x_i^{(1,2)}$, $i=0,\dots,10$]{\includegraphics[width=7cm]{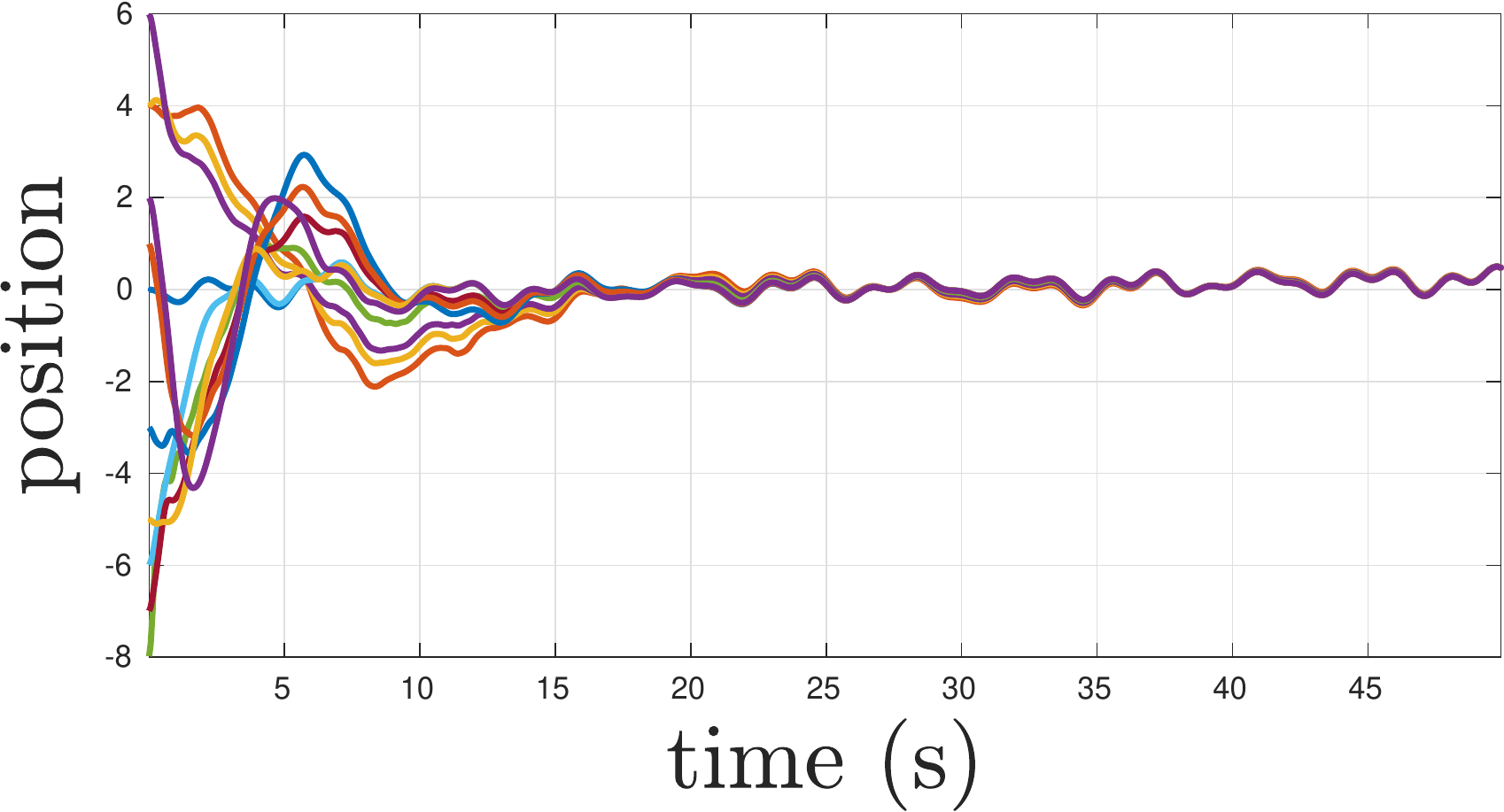}}
 &
  \subfloat[Positions $x_i^{(1,2)}$, $i=0,\dots,10$]{\includegraphics[width=7cm]{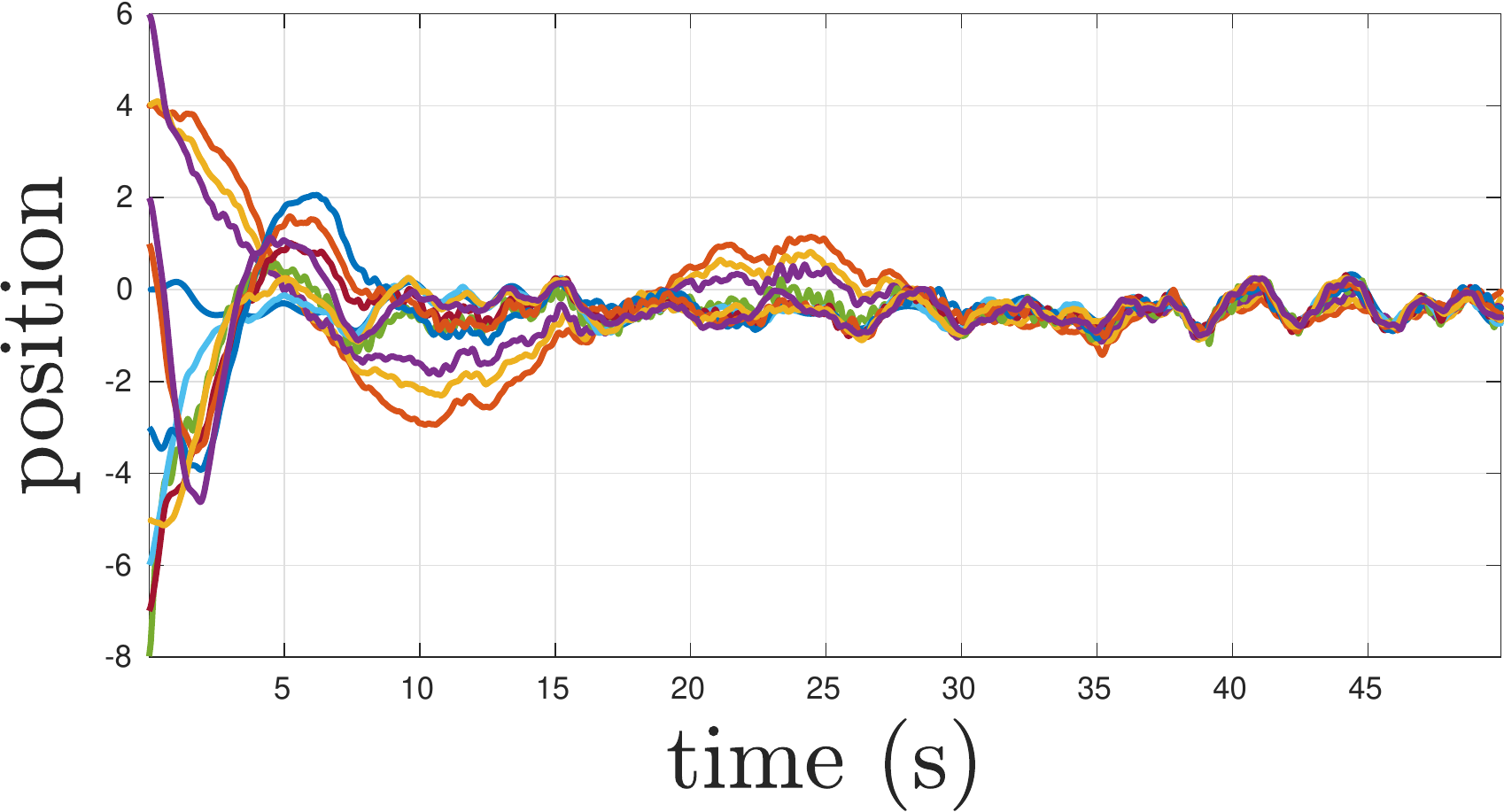}}
   \\
    \subfloat[Positions $x_i^{(1,3)}$, $i=0,\dots,10$]{\includegraphics[width=7cm]{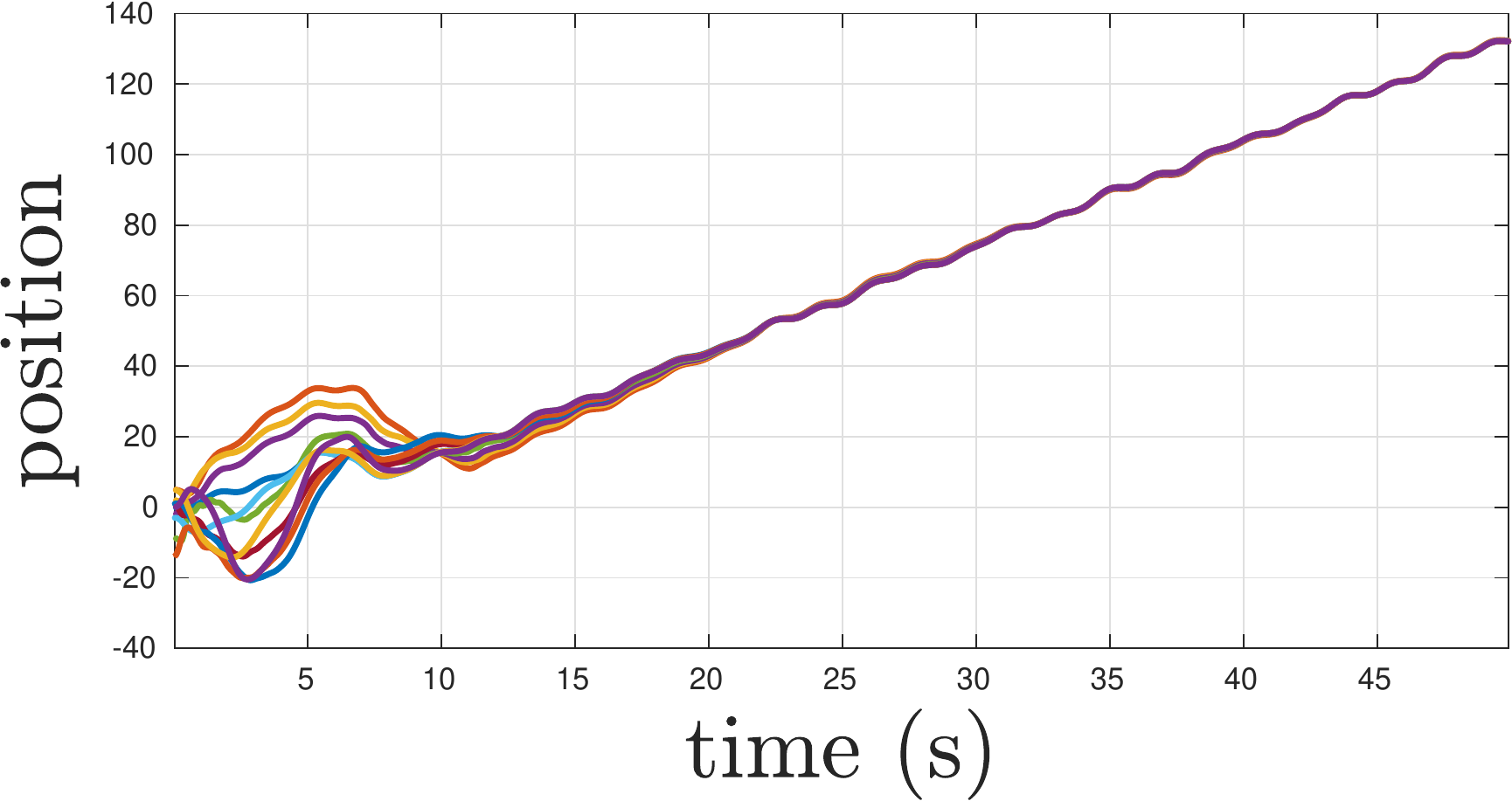}}
    &
     \subfloat[Positions $x_i^{(1,3)}$, $i=0,\dots,10$]{\includegraphics[width=7cm]{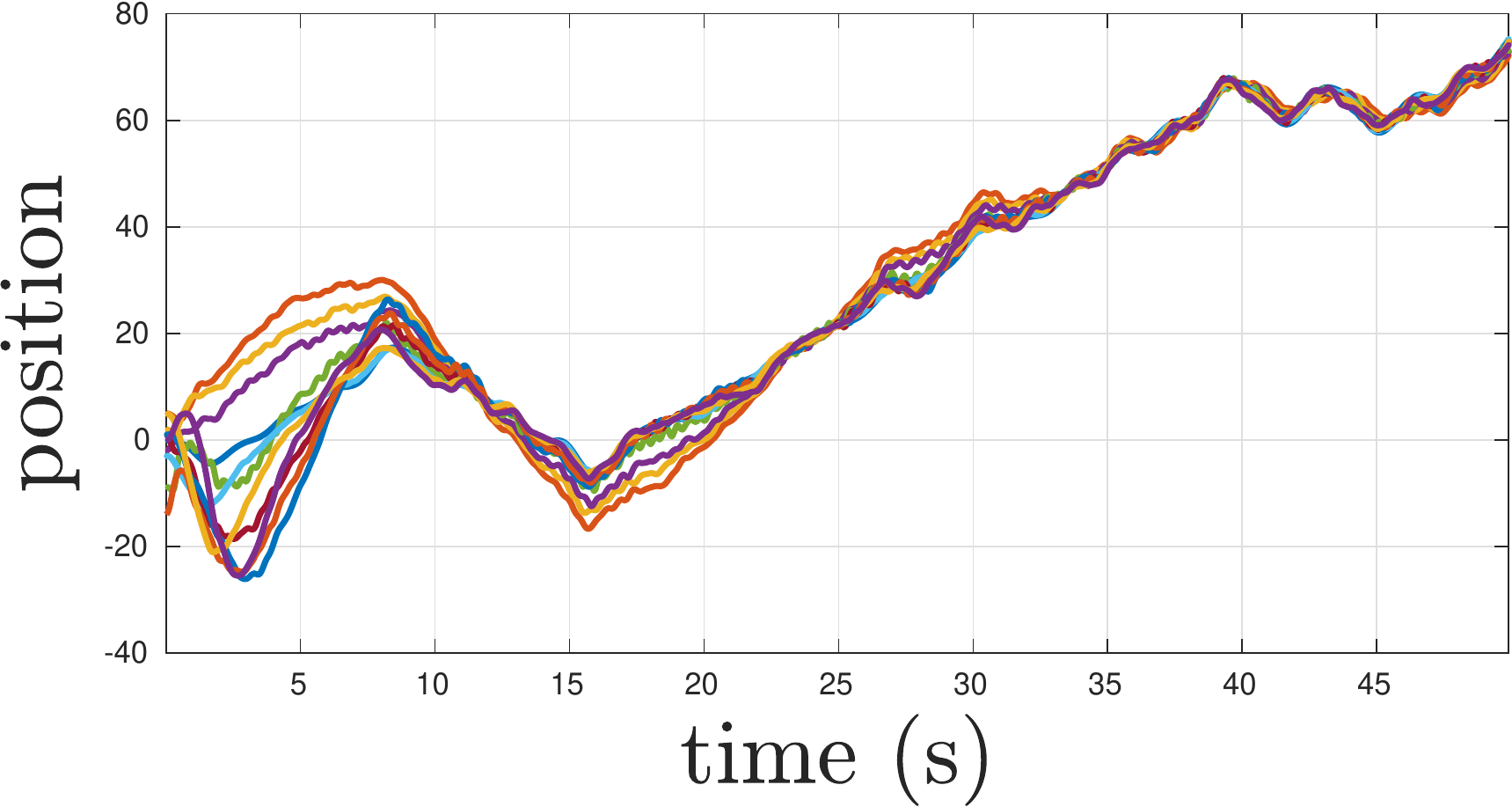}}
   \end{tabular}
  \end{center}
  \caption{Positions of the agents for $\tau_m=20ms$, $\tau_M=40ms$, $\theta=20$ and $\lambda=2$}
  \label{fig_simulation_results_position}
\end{figure}

\begin{figure}
\begin{center}
\begin{tabular}{cc}
\subfloat[Position error \tiny{$\left\Vert\hat x_{1,2}^{(1)}-x_2^{(1)}\right\Vert$}]{\includegraphics[width=7cm]{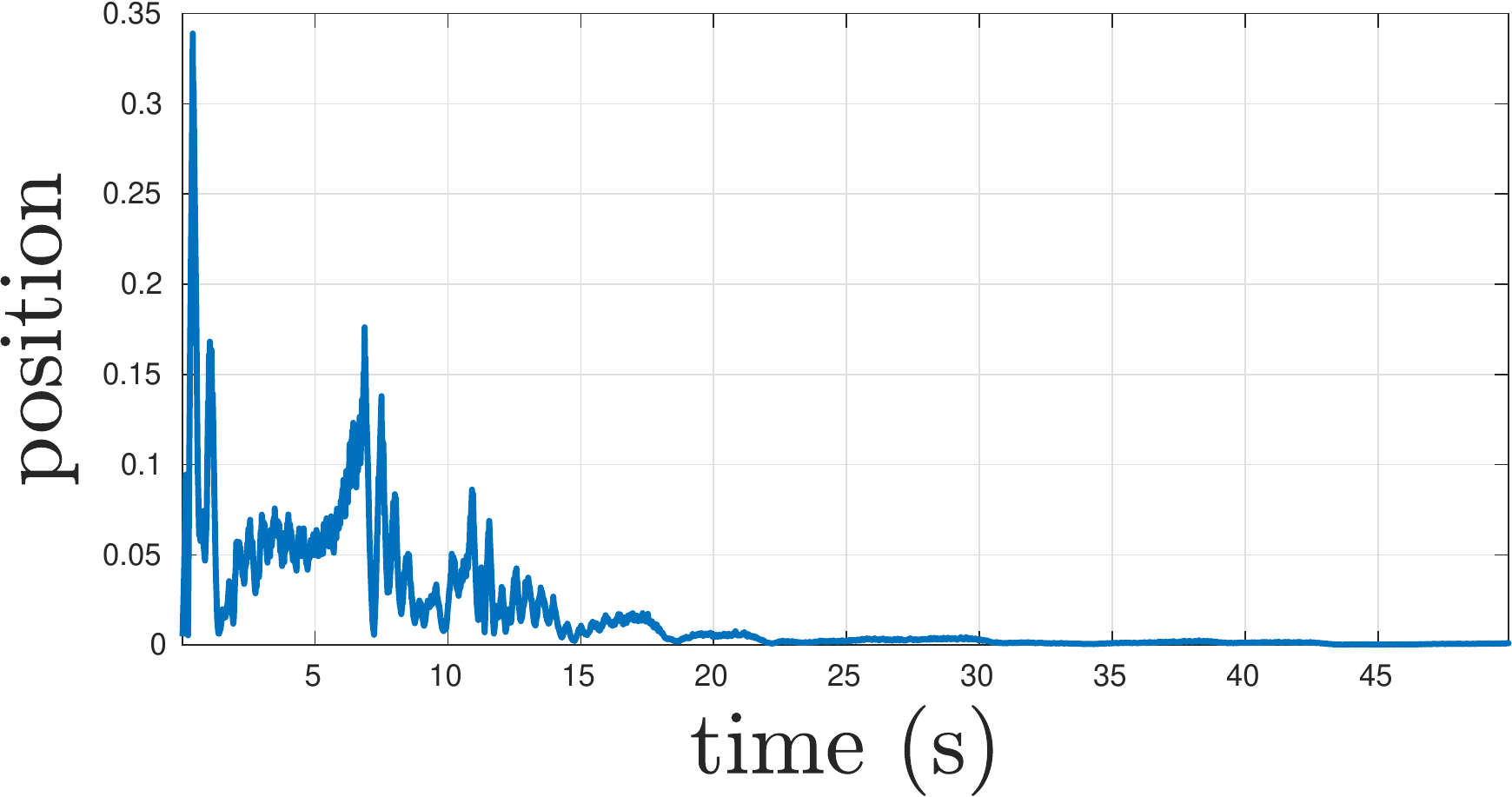}}
&
\subfloat[Position error \tiny{$\left\Vert\hat x_{1,2}^{(1)}-x_2^{(1)}\right\Vert$}]{\includegraphics[width=7cm]{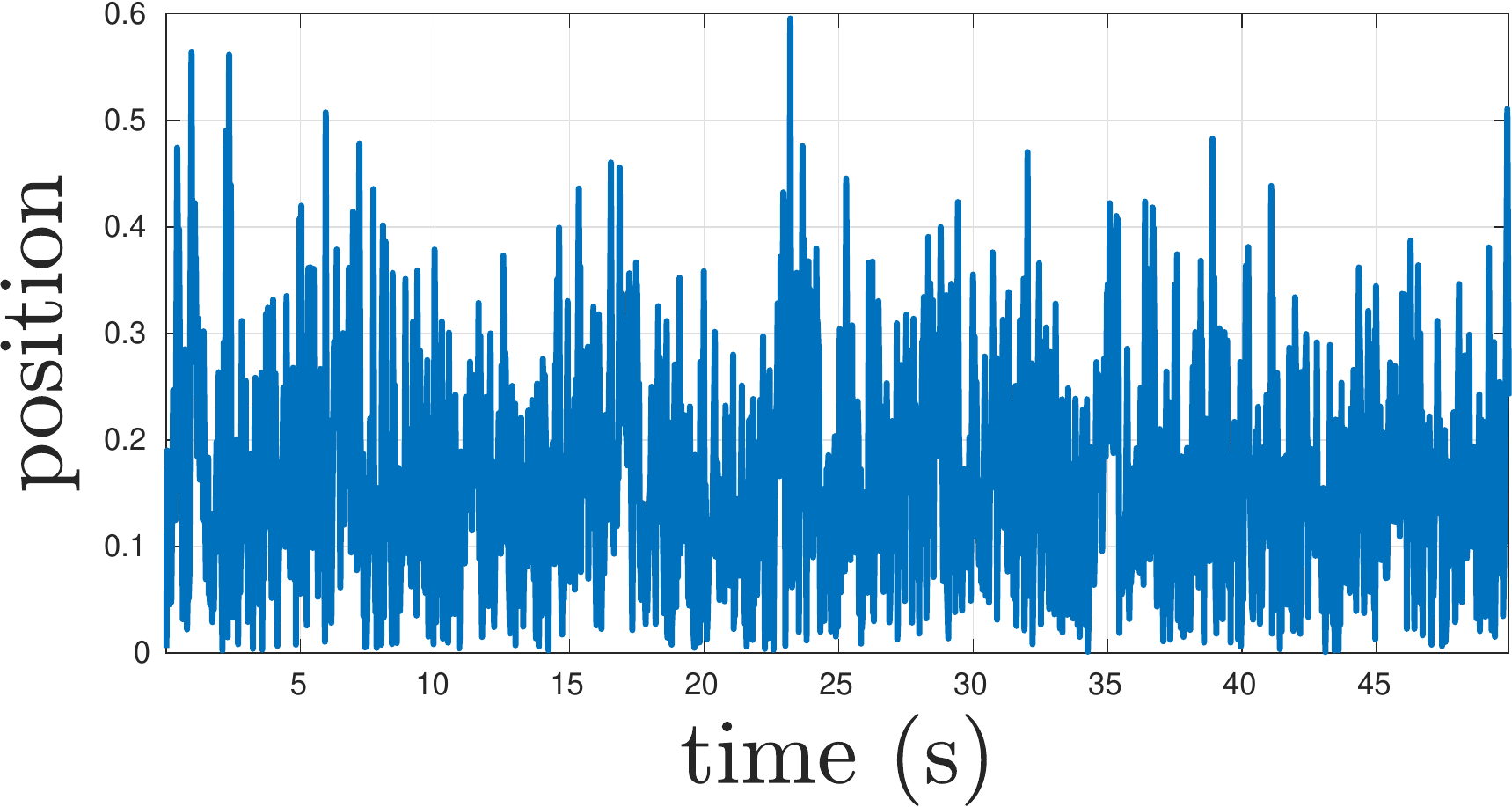}}
 \\
 \subfloat[Velocity error \tiny{$\left\Vert\hat x_{1,2}^{(2)}-x_2^{(2)}\right\Vert$}]{
 \includegraphics[width=7cm]{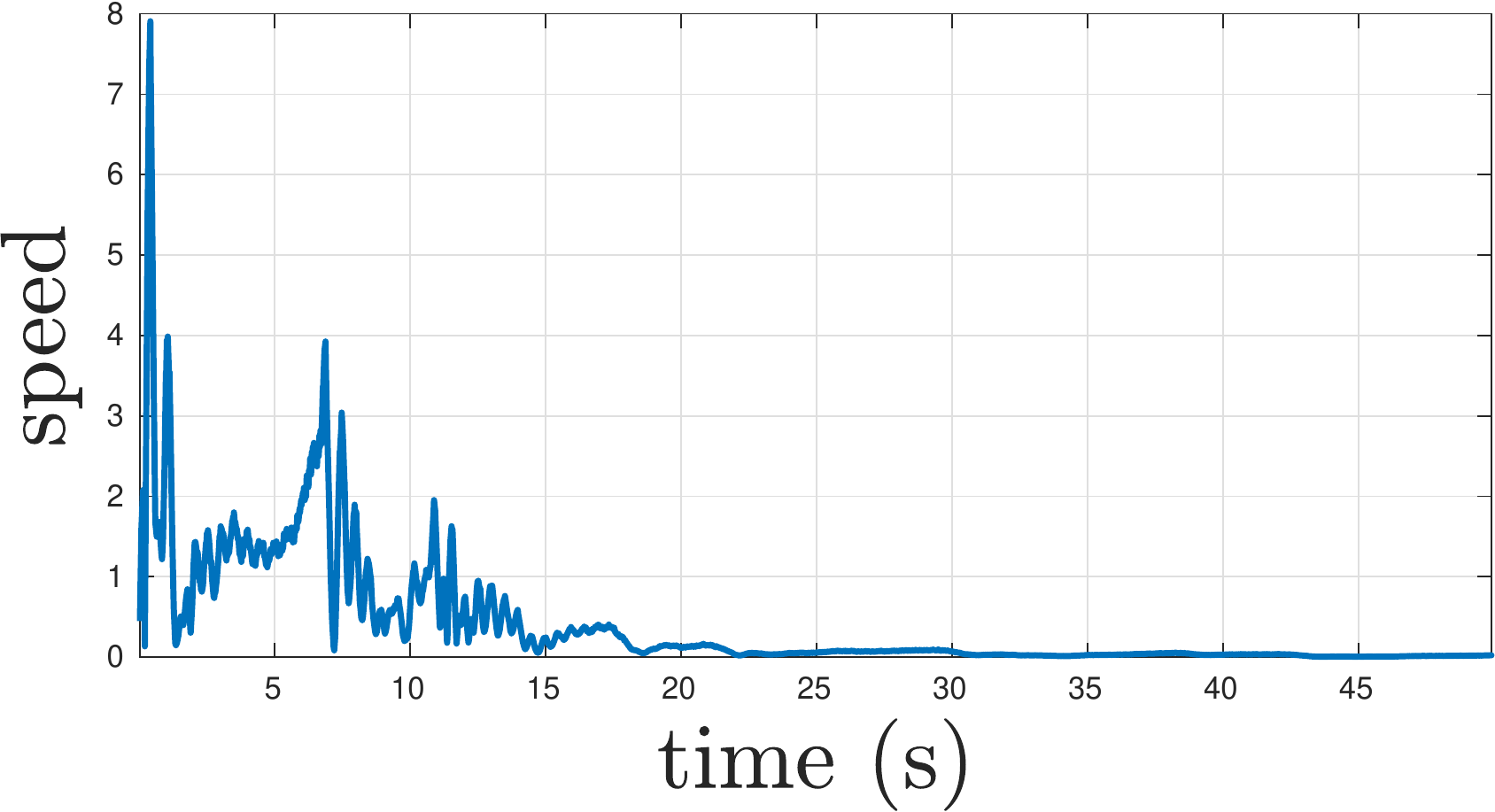}}
 &
  \subfloat[Velocity error \tiny{$\left\Vert\hat x_{1,2}^{(2)}-x_2^{(2)}\right\Vert$}]{
 \includegraphics[width=7cm]{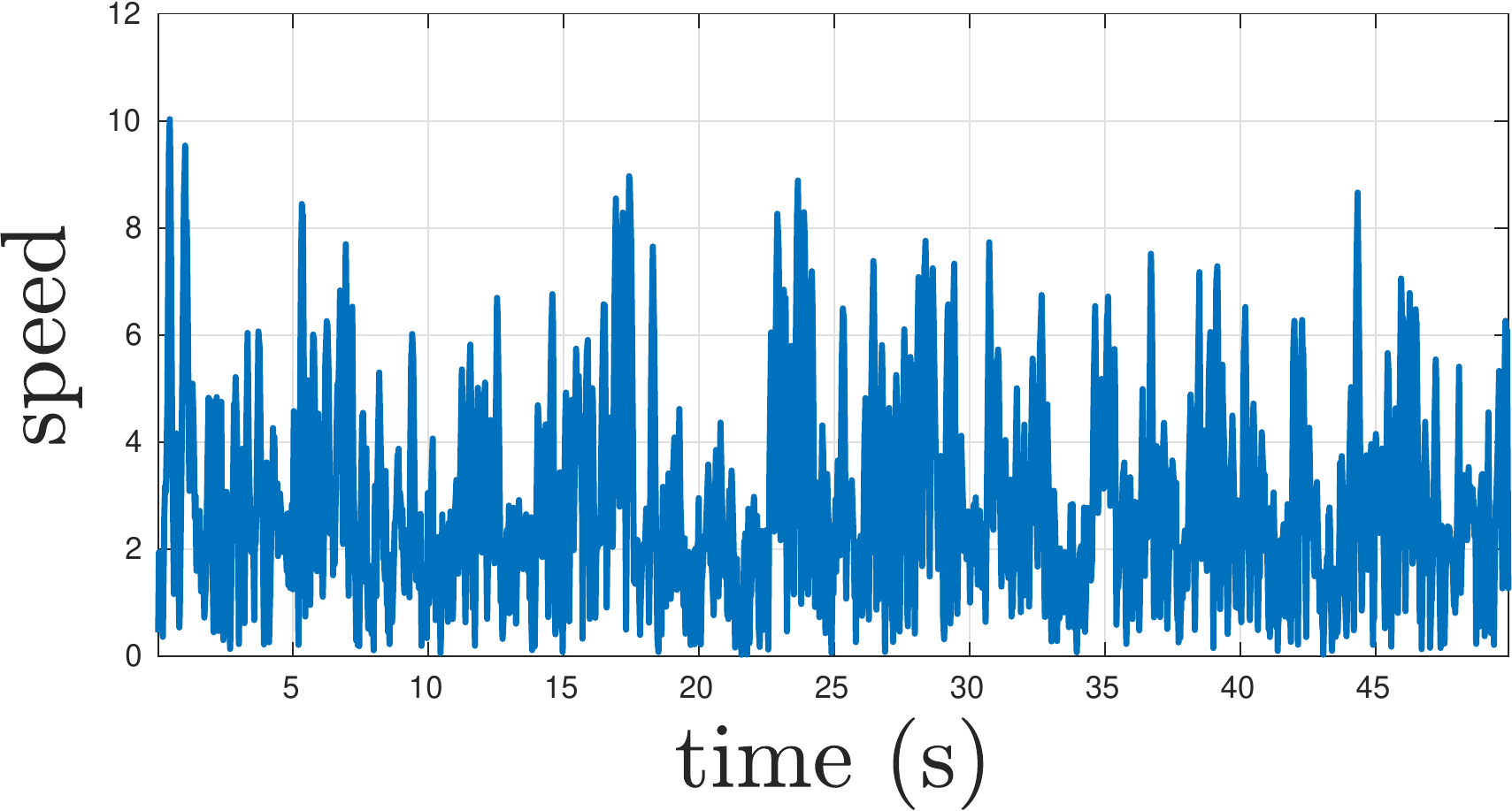}}
 \end{tabular}
 \end{center}
 \caption{Estimation error of the state of agent $2$ by agent $1$}
 \label{figure_estimation_example}
\end{figure}
\begin{figure}
\begin{center}
 \begin{tabular}{cc}
  \subfloat[Simulation without noise and uncertainty]{\includegraphics[width=7cm]{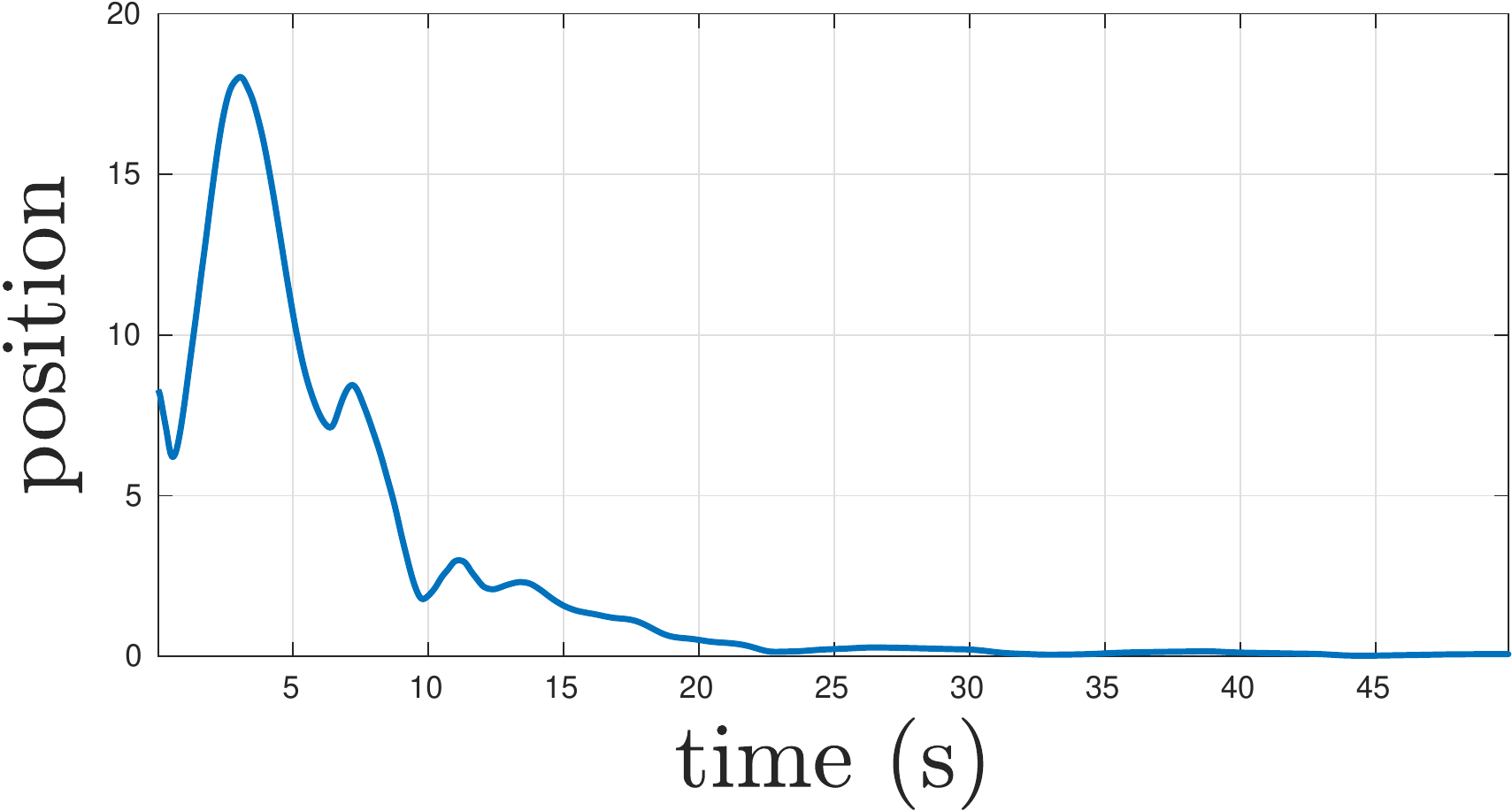}}
  &
   \subfloat[Simulation with noise and uncertainty]{\includegraphics[width=7cm]{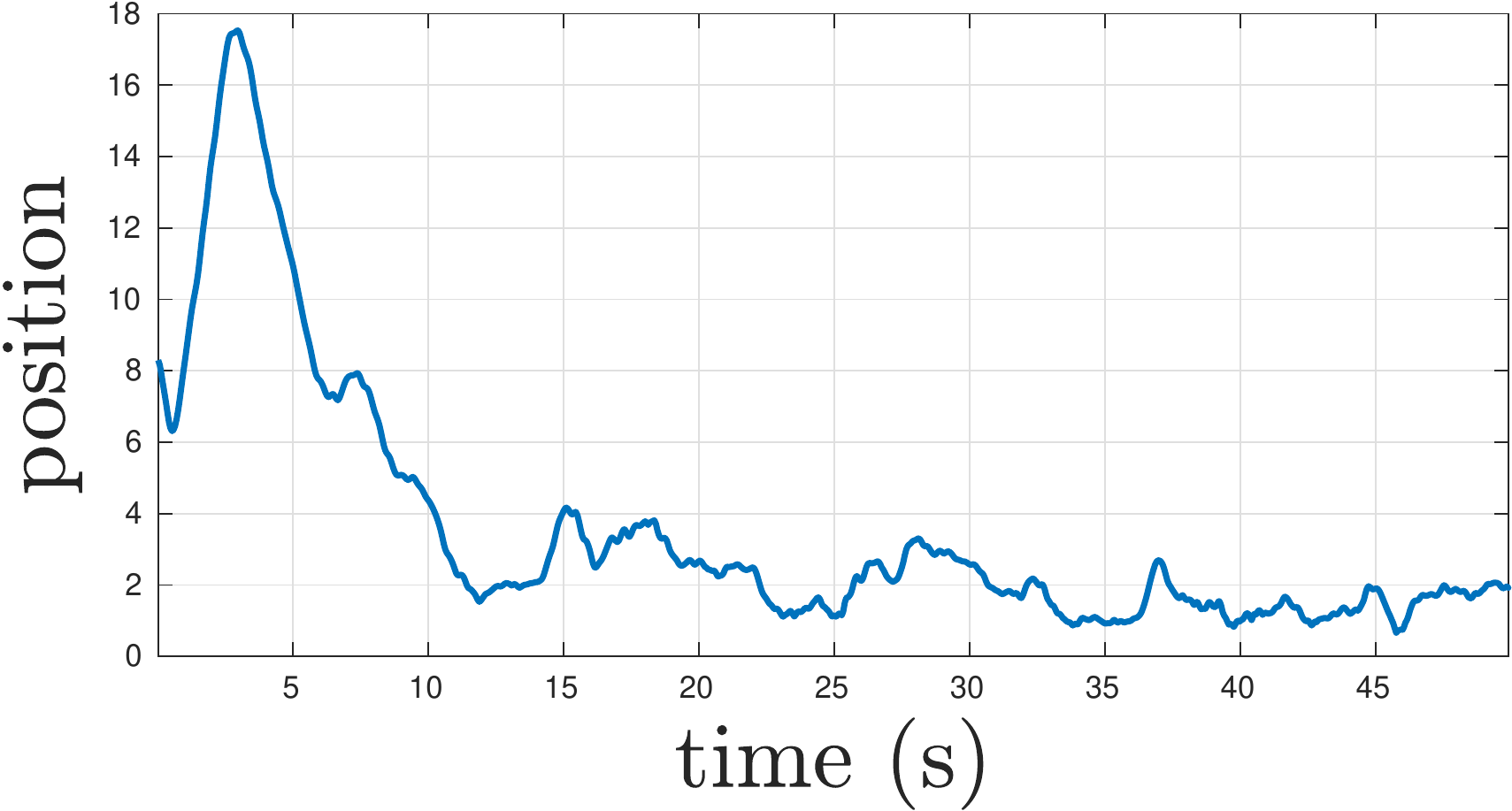}}
 \end{tabular}
 \end{center}
\caption{Position tracking mean error \tiny{$\frac{1}{N}\sum_{i=1}^N \left\Vert x_i^{(1)}-x_0^{(1)}\right\Vert$}}
\label{figure_mean_tracking_position_error}
\end{figure}

\section{Conclusion}\label{section_conclusion}
 The problem of leader-following consensus for a class of nonlinear systems which can only transmit their outputs at discrete aperiodic and asynchronous instants has been considered in this paper. A consensus protocol has been proposed based on a continuous-discrete time observer that reconstruct the states of the neighbors in continuous time, from sampled measurements only, and a continuous control law. It has been shown theoretically that if the tuning parameters fulfill some sufficient conditions then the convergence of the MAS with the proposed protocol is ensured under directed topology. Furthermore, in case of bounded uncertainties on the dynamics and bounded noise on the output, an exponential practical consensus is guaranteed. The performances of the approach have been illustrated with simulations on a MAS whose agents dynamics are given by a Chua's oscillator.
\bibliographystyle{plain}
\bibliography{bib_consensus_nonlinear.bib}
\end{document}